\definecolor{OliveGreen}{rgb}{0, 0.26, 0.15}
\definecolor{dark red}{rgb}{0.5, 0.05, 0.13}
\definecolor{dark red 2}{rgb}{0.81, 0.09, 0.13}
\definecolor{dark blue}{rgb}{0, 0.18, 0.39}
\definecolor{dark blue 2}{rgb}{0.03, 0.27, 0.49}
\definecolor{dark green2}{rgb}{0.07, 0.53, 0.03}
\definecolor{dark green}{rgb}{0, 0.44, 0}
\newtheorem{thm}{Theorem}[section]
\newtheorem{lem}[thm]{Lemma}
\newtheorem{prop}[thm]{Proposition}
\newtheorem{definition}[thm]{Definition}
\newtheorem{rem}[thm]{Remark}
\newcommand\PP{{\mathbb P}}
\newcommand\RR{{\mathbb R}}
\newcommand\ZZ{{\mathbb Z}}
\definecolor{lightcarminepink}{rgb}{0.9, 0.4, 0.38}
\definecolor{lightcoral}{rgb}{0.94, 0.5, 0.5}
\definecolor{lightcornflowerblue}{rgb}{0.6, 0.81, 0.93}
\definecolor{lightcyan}{rgb}{0.88, 1.0, 1.0}
\definecolor{lavenderblush}{rgb}{1.0, 0.94, 0.96}
\definecolor{deeppeach}{rgb}{1.0, 0.8, 0.64}
\definecolor{darkchampagne}{rgb}{0.76, 0.7, 0.5}
\definecolor{desertsand}{rgb}{0.93, 0.79, 0.69}
\definecolor{classicrose}{rgb}{0.98, 0.8, 0.91}
\definecolor{myyeallow}{rgb}{0.98, 0.91, 0.71}
\definecolor{carolinablue}{rgb}{0.6, 0.73, 0.89}
\definecolor{antiquewhite}{rgb}{0.98, 0.92, 0.84}
\definecolor{mycolor}{rgb}{0.98, 0.91, 0.71}
\definecolor{camel}{rgb}{0.76, 0.6, 0.42}
\definecolor{brightcerulean}{rgb}{0.11, 0.67, 0.84}
\definecolor{cerulean}{rgb}{0.0, 0.48, 0.65}
\definecolor{Gray}{rgb}{0.5, 0.5, 0.5}
\numberwithin{equation}{section}
\title[]{Diffusive to super-diffusive behavior in boundary driven exclusion }
\author{Patr\'icia Gon\c calves and Stefano Scotta}
\begin{document}

\maketitle{}

\begin{abstract}The purpose of this article is to study the hydrodynamic limit of the symmetric exclusion process with long jumps and in contact with infinitely extended reservoirs for a particular critical regime. The jumps are given in terms of a transition probability that has finite or infinite variance and the hydrodynamic equation is a diffusive equation, in the former case, or a fractional equation, in the latter case.  In this work, we treat the critical case, that is, when the variance  grows logarithmically with	the size of the system and corresponds to the case in which there is a transition from diffusive to super-diffusive behavior.
\end{abstract}

\section{Introduction}
\label{sec:1}

Since the pioneering work of Spitzer in \cite{Spitzer},  interacting particle systems were introduced in the mathematics community and have been extensively studied. These systems consist of microscopic models of random particles whose dynamics conserve one quantity (or more). Under a suitable space/time scale, the law of large numbers for the conserved quantity of the system can be obtained and this is known in the literature as the hydrodynamic limit. This limit states that the particles' random dynamics conserve a quantity whose space/time evolution can be well approximated by a solution of a partial differential equation, the \textit{hydrodynamic equation} \cite{KL}.

 One of the most classical interacting particle systems is the exclusion process. The exclusion rule dictates that jumps can occur, at a certain rate, if and only if the destination site is empty, otherwise the jump is suppressed.  In this article, we focus on the exclusion process evolving on a discrete space, namely, a finite lattice with $N$ points, where $N$ is a scaling parameter that will be taken to infinity. We consider a transition probability $p(\cdot)$, that regulates the intensity of the size of the jump, allowing long jumps and symmetric $p(z)=p(-z)$.  The model under investigation in this article was previously studied in  \cite{MJ} when evolving in $\mathbb Z^d$. Here, we restrict the dynamics of \cite{MJ} to a finite lattice but we place it in contact with stochastic reservoirs, that can inject or remove particles everywhere in the discrete space.  We observe that the reservoirs' action can either change the nature of the equation or bring additional boundary conditions to it.  Before introducing the model in detail, we recall previous results about its hydrodynamic behavior that are summarized in  Figure \ref{figura}.

First, we observe that it was previously studied in \cite{Adriana} a simpler version of our model, i.e. when the transition probability only allows nearest-neighbor jumps and the reservoirs have an impact only at boundary points. There, the hydrodynamical behavior is given by the heat equation with different boundary conditions. These boundary conditions depend on a parameter that regulates the intensity of the reservoirs' dynamics. 
 Some years later, in \cite{BGJO} it was introduced the model under investigation in this article, with a transition probability that: depends only on the length of the jump,  decreases as the jump's size increases, and  has finite variance; and the reservoirs have an impact everywhere in the system. As in \cite{Adriana}, the hydrodynamic limit is given by the heat equation, however, there are two extra regimes when compared to \cite{Adriana}.  In the case where both the exclusion and the reservoirs' dynamics have the same strength, the hydrodynamics is given by a reaction-diffusion equation, while when the reservoirs' dynamics is stronger than the exclusion dynamics, the hydrodynamics is given by a reaction equation.

 Later in \cite{BJGO2,mio}, it was studied the model of \cite{BGJO} when the variance of $p(\cdot)$ is infinite. Differently from the finite variance, the hydrodynamic equation is given in terms of the regional fractional Laplacian, which is the generator of processes as the censored process  (see \cite{Bogdan}) and/or the reflected L\'evy flight process (see \cite{guan}). An interesting review about this fractional operator can be found in \cite{frac}. Contrarily to the usual Laplacian operator, this last operator is non-local and it has a similar definition to the usual fractional Laplacian but is restricted to a finite domain. 
 
In this article we deduce the hydrodynamic limit in the case connecting the diffusive behavior studied in \cite{BJGO2} and the super-diffusive behavior studied in \cite{mio,BGJO}. Now we explain in detail the parameters of the model.  Our  choice of the transition probability, is given on $x, y \in \ZZ$ by \begin{equation}\label{probgen}
p(x,y)=p(y-x)=c_{\gamma}|x-y|^{-(\gamma+1)}
\end{equation} 
and $p(0)=0$. When $\gamma>2$ (resp. $\gamma\leq 2$)  the transition probability has  finite (resp. infinite) variance.  Above $c_\gamma$ is a normalizing constant, turning  $p(\cdot)$ into a probability. 
For this choice of $p(\cdot)$, the higher the value of $\gamma$ the lower is the probability of having a long jump. Another parameter that has an impact at the macroscopic level is $\theta\in\mathbb R$ that rules the reservoirs' strength so that the higher the value of  $\theta$ the weaker is the reservoirs' action. Whatever is the regime of $\gamma$, when $\theta$ is small (but negative),  the hydrodynamic equation has an additional reaction term with respect to the case $\theta\geq 0$, which comes from the fact that the reservoirs act in the whole discrete space where particles evolve. Moreover,  when  $\theta$ is very small (negative) so that the reservoirs' action becomes strong,  we obtain a reaction equation.  On the other hand, as the value of $\theta$ increases (positive), both in the case $1<\gamma<2$ {(\cite{BJGO2})} or $\gamma>2$ {(\cite{BGJO})},  the boundary conditions of the equation change. Indeed, in both regimes, the boundary conditions go from Dirichlet to Robin (or fractional Robin) and then  Neumann (or fractional Neumann), and this depends on the value of $\theta$. The case $0<\gamma<1$ is different (it corresponds to the case when $p(\cdot)$ has infinite first moment), since one crosses from a  reaction equation to a  regional fractional diffusion equation with Neumann boundary conditions, and this is a consequence of the particular properties of the regional fractional Laplacian operator on this regime of $\gamma$, for details see  \cite{mio,guan}. 
\begin{figure}[htb!]
	\begin{tikzpicture}[scale=0.28]
	\fill[color=red!15] (-20,-5) rectangle (-10,-18);
	\fill[color=red!15] (-10,-5) -- (5,-5) -- (20,-18) -- (-10,-18)-- cycle;
	\fill[color=green!5] (-10,-5) -- (5,-5) -- (5,5) -- cycle;
	\fill[color=green!25] (-20,-5) -- (-10,-5) -- (5,5) -- (5,12) -- (-20,12) -- cycle;
	\fill[color=blue!5] (5,-5) -- (20,-18) -- (20,5) -- (5,5) -- cycle;
	\fill[color=blue!20] (5,5) rectangle (20,12);
	\draw[-,=latex,dark red,ultra thick] (-20,-5) -- (5, -5) node[midway, below, sloped] {\scriptsize{{\textbf{\textcolor{dark red}{Frac. Reac. Diff. \& Dirichlet b.c. }}}}};
	\draw[dotted, ultra thick, white] (-20,10) -- (-20,-18);
	\draw[dotted, ultra thick, white] (-10,10) -- (-10,-18);
	\draw[-,=latex,dark red,ultra thick] (5,-5) -- (20, -18) node[midway, below, sloped] {\scriptsize{\textbf{{\textcolor{dark red}{Reac. Diffusion  \& Dirichlet b.c. }}}}};
	\draw[-,=latex,blue,ultra thick] (5,5) -- (20, 5) node[midway, sloped, below] {\scriptsize{{\textbf{{Diffusion \& Robin b.c.}}}}};
	\draw[-,=latex,OliveGreen ,ultra thick] (-10,-5) -- (5, 5) node[midway, sloped, below] {\scriptsize{{\textbf{\scriptsize{\textcolor{OliveGreen}{Fract. Diff. \& Robin b.c.}}}}}};
	\node[right, black] at (9,9) {\textbf{\scriptsize{\textcolor{dark blue}{Diffusion}}}};
	\node[right, black] at (7.7,7.7) {\textbf{\scriptsize{\textcolor{dark blue}{\& Neumann b.c.}}}};
	\node[right, white] at (9,-2) {\textbf{\scriptsize{\textcolor{dark blue 2}{Diffusion}}}} ;
	\node[right, white] at (7.7,-3.3) {\textbf{\scriptsize{\textcolor{dark blue 2}{\& Dirichlet b.c.}}}} ;
	\node[right, white] at (-18,7) {\textbf{\scriptsize{\textcolor{dark green}{Frac. Diff. \& Neumann b.c.}}}} ;
	\node[right, white] at (-3,-2.5) {\textbf{\scriptsize{\textcolor{dark green2}{Frac. Diff. }}}} ;
	\node[right, white] at (-4.5,-3.5) {\textbf{\scriptsize{\textcolor{dark green2}{\& Dirichlet b.c. }}}} ;
	\node[right, white] at (-12.4,-12) {\textbf{\scriptsize{\textcolor{red}{Reaction \& Dirichlet b.c.}}}} ;
	
	\node[rotate=270, above] at (-10,1) {\textcolor{white}{$\gamma = 1$}};
	\node[rotate=270, above] at (-25,1) {\textcolor{white}{$\gamma = 0$}};
	\node[right] at (20,5) {\textcolor{black}{$\theta=1$}};
	\node[left] at (-20,-5) {\textcolor{black}{$\theta=0$}};
	\draw[white,fill=white] (-10,-5) circle (1.5ex);
	
	\draw[<-,black] (11.5, -10.4) -- (13.6,-9) node[right] {\textcolor{black}{$\theta=2-\gamma$}};
	\draw[<-,black] (-3.3, -0.1) -- (-5,3) node[above] {\textcolor{black}{$\theta=\gamma-1$}};
	\node[rotate=270, above] at (4.1,13.8) {\textcolor{black}{$\gamma = 2$}};
	\draw[black,fill=black] (5,5) circle (1.8ex);
	\draw[black,fill=black] (5,-5) circle (1.8ex);
	\draw[ultra thick, black] (5,-5) -- (5,-18) node[midway, sloped, above] {\scriptsize{{{\scriptsize{Reaction}}}}} node[midway, sloped, below] {\scriptsize{{{\scriptsize{Dirichlet b.c.}}}}};;
	\draw[ultra thick, black] (5,5) -- (5,-5) node[midway, sloped, above] {\scriptsize{{{\scriptsize{Diffusion}}}}} node[midway, sloped, below] {\scriptsize{{{\scriptsize{Dirichlet b.c.}}}}};;
	\draw[ultra thick, black] (5,12) -- (5,5) node[midway, sloped, above] {\scriptsize{{{\scriptsize{Diffusion}}}}} node[midway, sloped, below] {\scriptsize{{{\scriptsize{Neumann b.c.}}}}};;
	\draw[<-,black] (4.7, 5.2) -- (-4,8.5) node[draw, above] {\scriptsize{\textcolor{black}{Diffusion \& Robin b.c.}}};
	\draw[<-,black] (4.7, -5.3) -- (-6,-9) node[draw, below] {\scriptsize{\textcolor{black}{Reac. Diffusion \& Dirichelt b.c.}}};
	\end{tikzpicture}
	\caption{Hydrodynamics depending on  $\theta$ (vertical axis) and $\gamma$ (horizontal axis).} 
	\label{figura}
\end{figure}
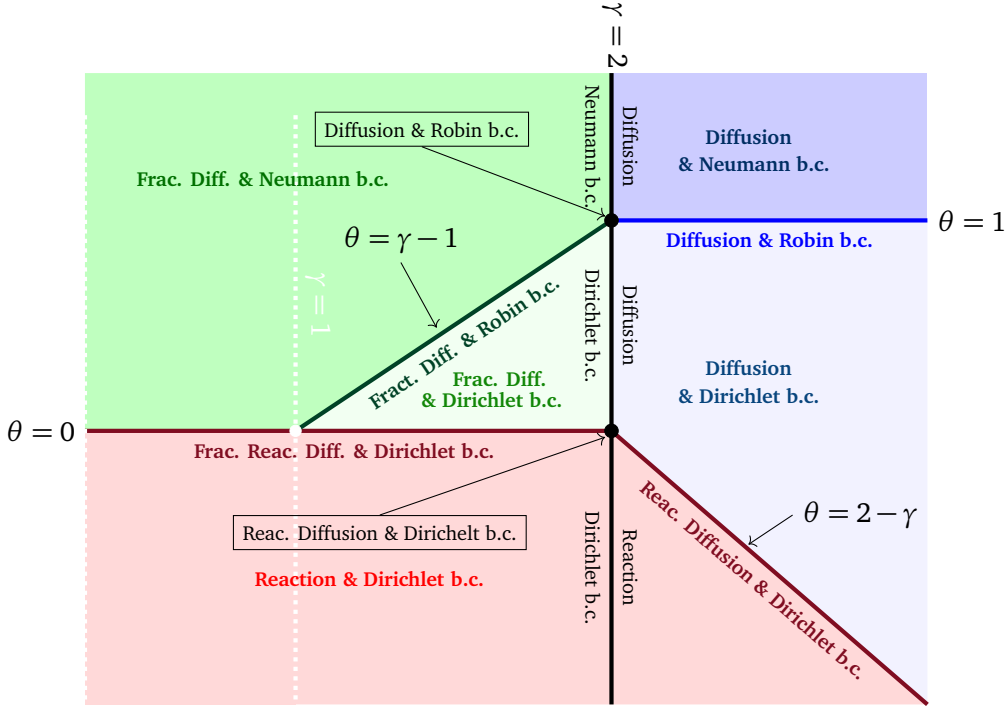
Observe that the macroscopic behavior of the system depends on the exponent $\gamma>0$ and the parameter $\theta$. In \cite{BGJO,BJGO2, mio} the  case $\gamma=2$ was left for exploration and it corresponds to the transition from the regime when $p(\cdot)$ has a finite variance to infinite variance, i.e. from diffusive to super-diffusive behavior. For this regime $\gamma=2$, $p(\cdot)$ has  variance of order $\log(N)$.  In this article we prove the hydrodynamic limit for all the regimes of $\theta$, see the black line in the figure below and we complete the hydrodynamic scenario for this model. We note that the nature of the hydrodynamic equation is still diffusive as in the case of finite variance, i.e. it is the heat equation for $\theta>0,$ a reaction-diffusion equation for $\theta=0$ and a reaction equation for $\theta<0$. As in the case of finite variance, for $\theta<1$ we get Dirichlet boundary conditions, for $\theta=1$ we get linear Robin boundary conditions, and for $\theta>1 $ we get Neumann boundary conditions.

Now, we highlight two important issues which complicate the analysis of this critical regime. The first one has to do with the time scale. For  $\gamma>2$ in the diffusive time scale $N^2$ we have a diffusive equation; for $\gamma<2$ in the sub-diffusive time scale $N^\gamma$ we have a fractional diffusion equation;  but for $\gamma=2$, since the variance of $p(\cdot)$ {is of order $\log(N)$ we need to take the time scale $N^2/\log(N)$}. This correction in the time scale allows controlling the logarithmic divergence of the variance but brings additional complications to derive the boundary conditions.  The second issue   is the fact that in the cases $\theta\in\{0,1\}$, the reservoirs' strength has to be scaled in a  precise way in order to get the reaction-diffusion equation with Dirichlet boundary conditions or the heat equation with Robin boundary conditions (see remark \ref{kappan}). 

A key step in the proof of the hydrodynamic limit is showing that the discrete operator arising from the microscopic particle dynamics, converges, in some sense, to the respective continuous operators. When $\gamma\leq 2$ the limit is the standard Laplacian operator but when  $\gamma<2$ it is the regional fractional Laplacian. This result was proved in Lemma 3.2 of \cite{BGJO} by extending the discrete operator to the whole set $\mathbb{Z}$ and deriving its limit to the usual Laplacian operator.  Here we managed to show this convergence without extending the discrete operator (Lemma \ref{L1conv}) by properly controlling boundary terms.

Once the hydrodynamic limit is proved, a natural question that follows is related to the description of the fluctuations around the hydrodynamical profile. This is a sort of central limit theorem for the empirical measure associated with this process. For our model, the equilibrium fluctuations were studied in \cite{fluc} when $\gamma>2$ but it would be very interesting to understand the case $\gamma \in (0,2]$, which is much more challenging. We leave this to future work.\\

{\textbf{Outline:}} This  paper is structured as follows. In Section \ref{model} we present in detail our model. In Section \ref{SR} we introduce all the notations that we need throughout the work and we present our result.  {Sections \ref{3} and \ref{2} are devoted to the proof of the hydrodynamic limit, namely, the characterization of limit points and the proof of tightness, respectively. In Section \ref{TEC} we prove some useful technical lemmas that are needed along  the proof of our result.  Finally in  Appendix \ref{appendix}  we discuss the uniqueness of the weak solutions that we obtain in the hydrodynamic limit. } 
 
\section{The model}\label{model}

We recall the  model introduced  in \cite{BGJO} that consists of  an exclusion process with  long jumps given in terms of a  symmetric transition probability, defined on $x, y \in \ZZ$ {by \eqref{probgen}. We take $\gamma=2$, so that} \begin{equation}\label{eq:prob_p}
p(x,y)=p(y-x)=\frac{c_{2}}{|x-y|^3}\mathbb{1}_{x\neq y}
\end{equation} 
 and the normalizing constant $c_2$ is defined by $c_2:=\big(\sum_{z \in \ZZ}|z|^{-3}\big)^{-1}.$ Before describing how the process evolves in time let us introduce here the following quantity that will be used through the article $$m:=\sum_{z\geq 1}zp(z).$$ Let us now explain the dynamics of the process. Fix $T>0$. For $N>1$, we consider particles evolving in the discrete set $\Lambda_N=\{1,\dots, N-1\}$  and we add infinitely many reservoirs at each site of $\mathbb{Z}\setminus \Lambda_N$.   {We consider} a  Markov process {$\{\eta_{t}\}_{t \in [0,T]}$} with state space $\Omega_N:=\{0,1\}^{\Lambda_N}$, and   for  $x \in \Lambda_N$, we say that the site $x$  is occupied (resp. empty) at time $t$ if $\eta_t(x)=1$ (resp.  $\eta_t(x)=0$).  Fix four parameters $\theta \in \mathbb{R}$, $\alpha, \beta \in [0,1]$ and $\kappa_N(\theta)$ defined in \eqref{kappa}. The dynamics can be described as follows:
\begin{itemize}
	\item to any couple of sites $(x,y)\in \Lambda_N$ we associate a Poisson process of rate $1$,  {according to which} we exchange the value of $\eta(x)$ and $\eta(y)$ with probability $p(x-y)$;
	\item to any couple $(x,y)$, with $x \in \Lambda_N$ and an integer $y\leq 0$, (resp. $y\geq N$) we associate a Poisson process of rate $1$,  and when there is an occurrence we flip the value of $\eta(x)$ to $1-\eta(x)$ with probability \begin{equation*}
	\begin{split}
	&\frac{\kappa_N(\theta)}{N^{\theta}}p(x-y)c_{x}(\eta;\alpha)\quad\Big(\textrm{resp.} \quad \frac{\kappa_N(\theta)}{N^{\theta}}p(x-y)c_{x}(\eta;\beta)\Big)\end{split}\end{equation*}
	where $\theta\in\mathbb R$ and  for {$\delta\in\{\alpha,\beta\}$},
	\begin{equation}\label{rate_c}
c_{x} (\eta;\delta) :=\left[ \eta(x) \left(1-\delta \right) + (1-\eta(x))\delta\right].
\end{equation}
\end{itemize}
The generator of the Markov process $\{\eta_t\}_{t\in [0,T]}$  {acts}  {on functions $f:\Omega_N \rightarrow \RR$  {as} $L_N f=L^0_N f+L_N^l f +L_N^r f $, where for $\eta\in\Omega_N$ } 
\begin{equation}\label{generators}
\begin{split}
&(L^0_N f)(\eta) =\cfrac{1}{2} \, \sum_{x,y \in \Lambda_N} p(x-y) [ f(\sigma^{x,y}\eta) -f(\eta)],\\
&(L_N^{l} f)(\eta) =\frac{\kappa_N(\theta)}{N^{\theta}}\sum_{\substack{x \in \Lambda_N\\ y \le 0}} p(x-y)c_{x}(\eta;\alpha) [f(\sigma^x\eta) - f(\eta)],\\
&(L_N^{r} f)(\eta)= \frac{\kappa_N(\theta)}{N^{\theta}}\sum_{\substack{x \in \Lambda_N \\ y \ge N}} p(x-y) c_{x}(\eta;\beta)  [f(\sigma^x\eta) - f(\eta)],
\end{split}
\end{equation}
and  \begin{equation}\label{kappa}
\kappa_N(\theta)=\begin{cases} \kappa, & \text{ if } \theta \notin\{0,1\};\\
\kappa \log(N), & \text{ if } \theta\in \{0,1\};
\end{cases}\end{equation}
for a fixed constant $\kappa>0$.
Above, 
\begin{equation*}
(\sigma^{x,y}\eta)(z) = 
\begin{cases}
\eta(z),& \textrm{if}\;\; z \ne x,y,\\
\eta(y),& \textrm{if}\;\; z=x,\\
\eta(x),& \textrm{if}\;\; z=y
\end{cases}
, \quad (\sigma^x\eta)(z)= 
\begin{cases}
\;\; \eta(z), &\textrm{if}\;\; z \ne x,\\
1-\eta(x),& \textrm{if}\;\; z=x.
\end{cases}
\end{equation*}
We are interested in analysing the space-time evolution of the  {empirical density of particles}. In order to have a non-trivial limit at the  macroscopic level, we need to accelerate the time of the  process by a factor $\Theta(N)$ (defined in \eqref{timescale}). Note that the infinitesimal generator of the  process $\{\eta_{t\Theta(N)}\}_{t\in[0,T]}=:\{\eta_t^N\}_{t\in [0,T]}$ is  $\Theta(N)L_N$.

\begin{rem}\label{kappan}
	We observe that the choice of $\kappa_N(\theta)$ above is justified by the following arguments. In order to have Robin boundary conditions for $\theta=1$ or the reaction term for $\theta=0$, we need to use the boundary strength as in the case   $\gamma>2$ analized in \cite{BGJO}. Since we scale time with a factor $\log(N)$, then it allows controlling the variance of $p(\cdot)$ and we get the  diffusive operator. Nevertheless, this  factor $\log(N)$ is suitable to control the Laplacian operator but it is not   {sufficient} to control gradients. In order to do so, we need to  consider the special choice of $\kappa_N(\theta)$ which permits to observe simultaneously the diffusive operator, the boundary conditions and the reaction term.
\end{rem}

\section{Statement of results}
\label{SR}
\subsection{Hydrodynamic equations}
\label{HE}

In order to define properly the notion of weak solutions of the several equations that we find, we need to introduce some notation. 

 {For any interval $I \subseteq \RR$, we denote by $C^k(I)$ (resp. $C_c^k (I)$)  the space of continuous real-valued functions (resp. with compact support included in $I$) with the first $k$-th derivatives being  continuous. Moreover, for $T>0$, we say that a function $H \in C^{m,n}([0,T]\times I)$ if $H(\cdot,x)\in C^m([0,T])$ and $H(t,\cdot) \in C^n(I)$ for any $x \in I$ and $t \in [0,T]$. Analogously $H \in C_c^{m,n}([0,T]\times I)$ if $H \in C^{m,n} ([0,T]\times I)$ and  $H(t,\cdot) \in C_c^n(I)$ for any $t \in [0,T]$.}
For any $d \in \mathbb{N}$ and any $G \in C^{m,n}([0,T]\times I)$, we denote by $G'$, $G''$ (or $\Delta G$), $G^{(d)}$, and $\partial_t G$ (or $\partial_s G$) resp. the first, the second, the $d$-th order derivative of $G$ with respect to the space variable, and the first derivative with respect to the time variable.  In addition, for any Polish space $E$, we consider the Skorokhod space $\mathcal{D}(I,E)$, i.e. the space of right continuous functions  with left limits, defined from  $I$ to $E$. Analogously, we denote by $C(I,E)$ the space of continuous functions from $I$ to $E$. 

The space $L^2(I)$  denotes the usual $L^2$ space with the Lebesgue measure, i.e. the space of functions $G$ such that $\int_IG(u)^2du <\infty$. This is the norm induced by the inner product $\langle G, H\rangle =\int_I G(u)H(u)du$ for $H, G \in L^2(I)$.  Analogously we denote by $L^1(I)$ the space of functions $G$ such that $\int_I|G(u)|du<\infty.$   {Let $\mathcal{H}^1(I)$ (resp. $\mathcal{H}_0^1 (I)$) be} the classical Sobolev space $\mathcal{W}^{1,2}(I)$ defined as the closure of $C^1(I)$ (resp. $C_c^1(I)$) with respect to the norm defined by 
$\|G\|^2_{\mathcal{H}^1(I)}:= \|G\|^2_{L^2(I)}+ \|G'\|^2_{L^2(I)}.$ 
Moreover, $L^2([0,T],\mathcal{H}^1(I))$ is the space of  functions $G$ defined on $[0,T]\times I$  for which  $\int_0^T\|G(s,\cdot)\|_{\mathcal H^1(I)}^2ds<\infty.$
If $I=[0,1]$ we will omit  the spacial variable in the notation and we just write $L^2$, $\mathcal{H}^1$, $\mathcal{H}_0^1$ and $L^1$. Let us introduce also,
 for  $u \in (0,1)$,
\begin{equation}\label{eq:V0V1}
V_0(u):=\alpha r^-(u)+\beta r^+(u) \quad \text{ and } \quad 	V_1(u):=r^-(u)+ r^+(u), 
\end{equation}
where
\begin{equation}
r^-(u):=\frac{c_2}{2u^2} \quad \text{ and } \quad 	r^+(u):=\frac{c_2}{2(1-u)^2}.
\end{equation}

To properly define the notion of weak solutions, we need to introduce other space of  test functions:
	\begin{equation}
	\begin{split}
		&\mathcal{S}_{Dir}:=\{G \in C^{1,2} ([0,T]\times (0,1)) : \; \; G_t(1)= G_t(0)=0, \forall  t \in [0,T]\};\\
	& {\mathcal{S}_{Rob}(\hat m, \hat c_2)}:=\{G \in C^{1,2} ([0,T]\times (0,1)) :\\& \quad \quad \quad \quad  \; \; \partial_uG_t(1)=\tfrac{\hat m}{  c_2}(\beta-G_t(1)) \text{ and } \partial_uG_t(0)=\tfrac{\hat m}{  c_2}(G_t(0)-\alpha), \forall  t \in [0,T]\}.
	\end{split}
	\end{equation}
 {Above $\hat m\geq 0$ and $\hat c_2>0$. Moreover, we use the notation  $\mathcal{S}_{Neu}:=\mathcal{S}_{Rob}(0, \hat c_2)$.}

\begin{definition}
	\label{reacdif}
	Let $\hat \kappa>0$, $\hat c_2 \geq 0$ and  let $g:[0,1]\rightarrow [0,1]$ be a measurable function. We say that  $\rho:[0,T]\times[0,1] \to [0,1]$ is a weak solution of the reaction-diffusion equation with inhomogeneous Dirichlet boundary conditions and initial condition $g$:
	\begin{equation}
	\label{eq:reacdif Equation}
	\begin{cases}
	&\partial_{t} \rho_{t}(u)= \hat c_2 \Delta \rho_t(u)+ \hat \kappa \big(V_0(u)-V_1(u)\rho_t(u)\big),  \quad (t,u) \in [0,T]\times(0,1),\\
	&\rho_t(0)=\alpha, \quad \rho_t(1)=\beta, \quad t \in (0,T]\\
	&{\rho}_{0}(u)= g(u),\quad u \in (0,1),
	\end{cases}
	\end{equation}
	if : 
	\begin{enumerate} 
		\item 
		If $\hat \kappa >0$, then  {$\int_0^T\int_0^1 \big\{(\frac{\alpha-\rho_s(u))^2}{u^\gamma}+\frac{(\beta-\rho_s(u))^2}{(1-u)^\gamma})\big\}duds<\infty.$}
		\item For all $t\in [0,T]$ and {all functions} $G \in C_c^{1,2} ([0,T]\times (0,1))$ we have that 
		\begin{equation}
		\label{eq:reac}
		\begin{split}
	{F_{{Reac}}(t, \rho,G,g)}:=\left\langle \rho_{t},  G_{t} \right\rangle& -\left\langle g,   G_{0}\right\rangle - \int_0^t\left\langle \rho_{s},\Big(\partial_s +  \hat c_2\Delta \Big) G_{s}  \right\rangle ds\\
		&+ \hat \kappa\int_0^t\int_0^1 G_s(u)\big(V_0(u)-V_1(u)\rho_s(u)\big)du ds=0.
		\end{split}   
		\end{equation}
		\item If $\hat c_2>0$ and $\hat \kappa=0$, $\rho_t(0)=\alpha$ and $\rho_t(1)=\beta$, $t$-almost  {everywhere} in $(0,T]$.
	\end{enumerate}
\end{definition}

\begin{definition}
	\label{dif}
	Let $\hat c_2 > 0$ and  let $g:[0,1]\rightarrow [0,1]$ be a measurable function. We say that  $\rho:[0,T]\times[0,1] \to [0,1]$ is a weak solution of the diffusion  equation with inhomogeneous Dirichlet boundary conditions and initial condition $g$:
	\begin{equation}
	\label{eq:dif Equation}
	\begin{cases}
	&\partial_{t} \rho_{t}(u)= \hat c_2 \Delta \rho_t(u),  \quad (t,u) \in [0,T]\times(0,1),\\
	&\rho_t(0)=\alpha, \quad \rho_t(1)=\beta, \quad t \in [0,T]\\
	&{\rho}_{0}(u)= g(u),\quad u \in (0,1),
	\end{cases}
	\end{equation}
	if : 
	\begin{enumerate} 
		\item For all $t\in [0,T]$ and {all functions} $G \in\mathcal{S}_{Dir}$ we have that 
		\begin{equation}
		\label{eq:dif}
		\begin{split}
		F_{{Dir}}(t, \rho,G,g):=\left\langle \rho_{t},  G_{t} \right\rangle& -\left\langle g,   G_{0}\right\rangle - \int_0^t\left\langle \rho_{s},\Big(\partial_s +  \hat c_2\Delta \Big) G_{s}  \right\rangle ds\\&-\int_0^t \big\{\beta\partial_uG_s(1)-\alpha \partial_u G(0)\big\}=0.
		\end{split}   
		\end{equation}
		\item  $\rho_t(0)=\alpha$ and $\rho_t(1)=\beta$, $t$-almost  {everywhere}  in $(0,T]$.
	\end{enumerate}
\end{definition}

\begin{rem}
Observe that item (3) of Definition \ref{reacdif} and item (2) of Definition \ref{dif} are needed in order to prove uniqueness of weak solutions, see Appendix \ref{appendix}. We prove that the solution satisfies  those two  items  in Section \ref{boundaries} for the case $\hat \kappa =0$. In the case of Definition \ref{reacdif} with $\hat\kappa, \hat c_2 >0$, we prove item (3) as a consequence of  item (1), as  in \cite{BGJO}.
\end{rem}

\begin{definition}
	\label{Rob}
	Let $\hat c_2 > 0$, $\hat m,\hat \kappa \geq 0$ and  let $g:[0,1]\rightarrow [0,1]$ be a measurable function. We say that  $\rho:[0,T]\times[0,1] \to [0,1]$ is a weak solution of the diffusion  equation with inhomogeneous Robin boundary conditions and initial condition $g$:
	\begin{equation}
	\label{eq:dif Rob}
	\begin{cases}
	&\partial_{t} \rho_{t}(u)= \hat c_2 \Delta \rho_t(u),  \quad (t,u) \in [0,T]\times(0,1),\\
	&\partial_u\rho_t(0)=\tfrac{\hat m}{ \hat c_2}(\rho_t(0)-\alpha), \quad \partial_u\rho_t(1)=\tfrac{\hat m}{ \hat c_2}(\beta-\rho_t(1)), \quad t \in [0,T]\\
	&{\rho}_{0}(u)= g(u),\quad u \in (0,1),
	\end{cases}
	\end{equation}
	if  for all $t\in [0,T]$ and {all functions}  {$G \in \mathcal{S}_{Rob}(\hat m,\hat c_2)$}  we have that 
		\begin{equation}
		\label{eq:Rob}
		\begin{split}
		F_{{Rob}}(t, \rho,G,g):=&\left\langle \rho_{t},  G_{t} \right\rangle -\left\langle g,   G_{0}\right\rangle - \int_0^t\left\langle \rho_{s},\Big(\partial_s +  \hat c_2\Delta \Big) G_{s}  \right\rangle ds=0.\\
		\end{split}   
		\end{equation}

\end{definition}

\begin{rem}
    Observe that   \eqref{eq:dif Rob} with  $\hat m=0$ becomes  the heat equation with Neumann boundary conditions.
\end{rem}

\subsection{Hydrodynamic limit}
Denote by ${\mathcal M}^+$ the space of positive measures on $[0,1]$ with total mass bounded by $1$ equipped with the weak topology. The empirical measure $\pi^{N}(\eta,du) \in \mathcal M^+$ is defined, for any configuration  $\eta \in \Omega_{N}$, by 
\begin{equation}\label{MedEmp}
\pi^{N}(\eta, du):=\dfrac{1}{N-1}\sum _{x\in \Lambda_{N}}\eta(x)\delta_{\frac{x}{N}}\left( du\right),
\end{equation}
where $\delta_{a}$ is a Dirac mass on $a \in [0,1]$. We use the notation
$\pi^{N}_{t}(du):=\pi^{N}(\eta_t^N, du).$

Fix $T>0$. We denote by $\mathbb P _{\mu _{N}}$ the probability measure in the Skorohod space $\mathcal D([0,T], \Omega_N)$ induced by the  Markov process $\{\eta_{t}^N\}_{t\in[0,T]} $ with initial distribution $\mu_N$. Moreover, we denote by $\mathbb E_{\mu _{N}}$ the expectation with respect to $\mathbb {P}_{\mu _{N}}$. Let $\lbrace{ Q}_{N}\rbrace_{N>1}$ be the  sequence of probability measures on $\mathcal D([0,T],\mathcal{M}^{+})$ induced by the  Markov process $\lbrace \pi_{t}^{N}\rbrace_{t\in [0,T]}$ and by $\mathbb{P}_{\mu_{N}}$.

\begin{definition}
	Let $\rho_0: [0,1]\rightarrow[0,1]$ be a measurable function. We say that a sequence of probability measures $\lbrace\mu_{N}\rbrace_{N > 1 }$ on $\Omega_{N}$  is associated with the profile $\rho_{0}(\cdot)$ if for any continuous function $G:[0,1]\rightarrow \mathbb{R}$  and every $\delta > 0$ 
	\begin{equation*}
	\lim _{N\to\infty } \mu _{N}\left( \eta \in \Omega_{N} : \bigg| \dfrac{1}{N-1}\sum_{x \in \Lambda_{N} }G\left(\tfrac{x}{N} \right)\eta(x) - \langle G,\rho_{0}\rangle \bigg|    > \delta \right)= 0.
	\end{equation*}
\end{definition}
The next statement is the main theorem of this work.

\begin{thm}[Hydrodynamic limit]
	
	\label{theo:hydro_limit}
	\quad
	
	Let $g:[0,1]\rightarrow[0,1]$ be a measurable function and let $\lbrace\mu _{N}\rbrace_{N> 1}$ be a sequence of probability measures in $\Omega_{N}$ associated with $g(\cdot)$. Then, for any $0\leq t \leq T$,
	\begin{equation*}\label{limHidreform}
	\lim _{N\to\infty } \PP_{\mu _{N}}\left( \eta_{\cdot}^{N} \in \mathcal D([0,T], {\Omega_{N}}) : \bigg| \dfrac{1}{N-1}\sum_{x \in \Lambda_{N} }G\left(\tfrac{x}{N} \right)\eta_{t}^N(x) - \langle G,\rho_{t}\rangle \bigg|    > \delta \right)= 0,
	\end{equation*}
	where the time scale $\Theta(N)$ is given by 
	\begin{equation}
	\Theta(N)=\begin{cases} N^{2 + \theta} & \theta<0;\\
	\frac{N^{2}}{\log(N)} & \theta\geq 0;
	\end{cases}
	\label{timescale}
	\end{equation}
	and  $\rho$ is the unique weak solution of:
	\begin{itemize}
		\item [$\bullet$] \eqref{eq:reacdif Equation} with $\hat \kappa=\kappa$ and $\hat c_2=0$ if $\theta <0$;
		\item [$\bullet$] \eqref{eq:reacdif Equation} with $\hat  \kappa= \kappa $ and $\hat c_2=c_2$, if $\theta =0$;
		\item [$\bullet$] \eqref{eq:dif Equation} with $\hat c_2=c_2$,  if $\theta\in (0,1)$;
		\item [$\bullet$] \eqref{eq:dif Rob} with $\hat  m= m>0 $ and $\hat c_2=c_2>0$, if $\theta=1$;
		\item[$\bullet$] \eqref{eq:dif Rob} with $\hat  m=0 $ and $\hat c_2=c_2>0$, if $\theta>1$.
	\end{itemize}
\end{thm}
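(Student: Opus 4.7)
The strategy I would follow is the classical three-step scheme for hydrodynamic limits. First, prove tightness of the sequence $\{Q_N\}_{N>1}$ of laws of the empirical measure process on $\mathcal{D}([0,T], \mathcal{M}^+)$; this is the content of Section \ref{2}. Second, characterize every limit point $Q^*$ as being concentrated on trajectories of the form $\pi_t(du) = \rho_t(u)\,du$ with $\rho$ a weak solution of the appropriate PDE, which depends on $\theta$ according to the five cases listed. Third, invoke uniqueness of the weak solution (Appendix \ref{appendix}) to conclude that $Q_N$ converges weakly to the deterministic trajectory $\rho$; evaluating this limit at a fixed time $t$ against a continuous test function $G$ then yields the displayed statement.

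For tightness I would use the Aldous--Rebolledo criterion, which reduces the problem to bounding the martingale part and the integrated drift of $\langle \pi^N_t, G \rangle$ for a dense family of test functions $G$. The Dynkin martingale
\begin{equation*}
M^N_t(G) = \langle \pi^N_t, G \rangle - \langle \pi^N_0, G \rangle - \int_0^t \Theta(N) L_N \langle \pi^N_s, G \rangle\, ds
\end{equation*}
has a quadratic variation that can be computed explicitly from $\Theta(N) L_N (\langle \pi^N_\cdot, G \rangle)^2 - 2 \langle \pi^N_\cdot, G \rangle \Theta(N) L_N \langle \pi^N_\cdot, G \rangle$; the specific form of $\Theta(N)$ and $\kappa_N(\theta)$ ensures that $\mathbb{E}[(M^N_t(G))^2] = O(1/N)$, so the martingale vanishes in $L^2$. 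The integrated-drift equicontinuity follows from uniform bounds on $\Theta(N) L_N \langle \pi^N_s, G \rangle$ that are a byproduct of the analysis below.

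To characterize limit points, I would decompose $\Theta(N) L_N \langle \pi^N_s, G \rangle$ into a bulk piece coming from $L_N^0$ and boundary pieces coming from $L_N^l + L_N^r$. After summation by parts the bulk piece equals $\tfrac{1}{N-1} \sum_{x \in \Lambda_N} \Theta(N)(\mathcal{L}_N G)(\tfrac{x}{N}) \eta^N_s(x)$ plus correction terms stemming from jumps that cross $\partial \Lambda_N$. Lemma \ref{L1conv} yields $\Theta(N)\mathcal{L}_N G \to c_2 \Delta G$ in a suitable norm when $\theta \geq 0$, while for $\theta < 0$ the choice $\Theta(N) = N^{2+\theta}$ makes the bulk subdominant. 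The boundary contributions, weighted by $\Theta(N) \kappa_N(\theta) N^{-\theta}$, produce either the reaction term $\hat\kappa (V_0(u) - V_1(u)\rho_s(u))$ for $\theta \leq 0$, a linear Robin flux with coefficient $\hat m = m$ for $\theta = 1$, or a vanishing contribution for $\theta > 1$. To replace $\eta^N_s(x)$ by $\alpha$ or $\beta$ near the boundary in the Dirichlet regime ($\theta < 1$), I would prove a replacement lemma via entropy and Dirichlet form estimates in the spirit of \cite{BGJO, BJGO2}; this also yields item (2) of Definition \ref{dif} and, together with the energy-type bound in item (1) of Definition \ref{reacdif} established as in \cite{BGJO}, item (3) of the latter definition.

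The principal obstacle is the critical regime $\gamma = 2$: the variance of $p(\cdot)$ diverges logarithmically, forcing the unusual time scale $\Theta(N) = N^2/\log(N)$ when $\theta \geq 0$. This makes the discrete-to-continuous convergence of the operator delicate because the logarithmic factor has to be compensated simultaneously in the Laplacian, in the gradient terms producing the Robin coefficient, and in the reaction term. In particular, for $\theta \in \{0,1\}$ the boundary rate $\kappa_N(\theta) = \kappa \log(N)$ has to be tuned precisely so that the limiting constants ($\hat\kappa$ at $\theta = 0$ and $\hat m$ at $\theta = 1$) appear with the correct values, as anticipated in Remark \ref{kappan}. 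Producing sharp estimates on the boundary sums $\sum_{x \in \Lambda_N, y \notin \Lambda_N} p(x-y)(\cdot)$ with the right logarithmic prefactors, together with the variant of Lemma \ref{L1conv} that avoids extending the discrete operator to all of $\mathbb{Z}$, is where most of the technical work lies.
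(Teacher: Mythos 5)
Your proposal follows the same route as the paper: tightness of $\{Q_N\}_{N>1}$ via the Aldous-type criterion with a uniform bound on $\Theta(N)L_N\langle\pi^N_s,G\rangle$ and on the martingale's quadratic variation, characterization of limit points by splitting the generator action into the bulk term (handled by Lemma \ref{L1conv} for $\theta\geq 0$, and shown subdominant for $\theta<0$) and the reservoir terms (handled by replacement lemmas proved with the entropy inequality, Feynman--Kac and the Dirichlet form bounds \eqref{estimateDirLip}--\eqref{estimateDir}), and finally uniqueness of the weak solutions via the Sturm--Liouville analysis of Appendix \ref{appendix}. You also correctly identify the two genuine difficulties of the critical case, namely the $N^2/\log(N)$ time scale and the tuning of $\kappa_N(\theta)$ for $\theta\in\{0,1\}$, so the proposal matches the paper's proof in both structure and substance.
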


The strategy of the proof  is by means of the entropy method, first introduced in \cite{GPV}, and it is divided into two  fundamental steps.  The first one consists in  showing that the sequence $\{Q_N\}_{N>1}$ admits limit points (Section \ref{2}). The second consists in characterizing uniquely the limit point, by showing that it is is a delta measure supported on the trajectories of measures that are absolutely continuous with respect to the Lebesgue measure and whose density is the unique weak solution of the hydrodynamic equation. This program is achieved in Section \ref{3}, where we prove that  the density satisfies the integral formulation of the respective  hydrodynamic equation. Finally, in Appendix \ref{appendix} we prove the uniqueness of  weak solutions of the hydrodynamic equations.

\section{Characterization of limit points}\label{3}
We start assuming that the sequence of measures {$\{Q_N\}_{N>1}$ } has a subsequence converging weakly  to some measure $Q$.  In fact, this is true and it  follows from Proposition \ref{TIGHT}.  Moreover, since we work with an exclusion process, the limiting measure $Q$ is  concentrated on  a trajectory of measures that are absolutely continuous with respect to the Lebesgue measure, that is,  of the form $\pi_{\cdot}(du)=\rho_{\cdot}(u)du$. The proof of this result  is quite standard  and  we do not repeat it here, but we refer the interested reader to, for example,  Section 2.10 of \cite{patricianote}. Therefore,  we want to show that the limit point $Q$ is concentrated on measures whose  densities $\rho$ are unique weak solutions of the respective  hydrodynamic equations. This is the content of the next proposition.

\begin{prop}\label{cara}
	For any limit point $Q$ of the sequence $\{Q_N\}_{N>1}$ it holds
	\begin{itemize}
		\item for $\theta \leq 0$:
			\begin{equation*}\small
		Q\left(\pi_{\cdot} : 	{F_{{Reac}}}(t, \rho,G,g)=0, \forall t \in [0,T], \forall G \in C_c^{1,2}([0,T]\times [0,1]) \right)=1.\normalsize
		\end{equation*}
		{where $\hat c_2=c_2$ for $\theta=0$ and $\hat c_2=0$ for $\theta<0$.}
		\item for $\theta \in (0,1)$:
		\begin{equation*}\small
		Q\left(\pi_{\cdot}: 	F_{{Dir}}(t, \rho,G,g)=0, \forall t \in [0,T], \forall G \in \mathcal S_{Dir}\right)=1.
		\end{equation*}\normalsize
	\item 	for {$\theta \geq 1$:}
		\begin{equation*}\small
		Q\left(\pi_{\cdot} : {	F_{{Rob}}(t, \rho,G,g)=0}, \forall t \in [0,T],\forall G \in  {\mathcal{S}_{Rob}(\hat m,\hat c_2)} \right)=1.\normalsize
		\end{equation*}
		{where for $\theta=1$, $\hat m=m$ and  {$\hat c_2=c_2$}; and for $\theta>1$, $\hat m=0$.}
	\end{itemize}

\end{prop}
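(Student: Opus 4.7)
The plan is to apply the martingale/entropy method, treating each regime of $\theta$ in a single unified computation and reading off the appropriate limiting equation from the prefactors. Fix a test function $G$ in the space appropriate for the regime (namely $C_c^{1,2}([0,T]\times(0,1))$ for $\theta\le 0$, $\mathcal{S}_{Dir}$ for $\theta\in(0,1)$, and $\mathcal{S}_{Rob}(\hat m,\hat c_2)$ for $\theta\ge 1$), and introduce Dynkin's martingale
\begin{equation*}
M^N_t(G) = \langle \pi^N_t, G_t\rangle - \langle \pi^N_0, G_0\rangle - \int_0^t\bigl(\partial_s + \Theta(N)L_N\bigr)\langle \pi^N_s, G_s\rangle\,ds.
\end{equation*}
The bounds $|\eta(x)|\le 1$, the time scaling \eqref{timescale}, and the fact that the variance of $p(\cdot)$ truncated at $N$ grows like $\log N$ imply, via the standard computation of the quadratic variation, that $\EE_{\mu_N}[(M^N_t(G))^2]\to 0$. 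It therefore suffices to identify the limit of the drift integrand.

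Split $L_N=L^0_N+L^l_N+L^r_N$. For the bulk piece, a discrete summation by parts reduces $\Theta(N)L_N^0\langle\pi^N_s,G_s\rangle$ to an average of $\eta^N_s(x)$ against $\Theta(N)$ applied to the discrete operator attached to $L_N^0$ acting on $G_s$; Lemma \ref{L1conv} yields the $L^1$ convergence of this quantity to $\hat c_2\Delta G_s$, with $\hat c_2=c_2$ for $\theta\ge 0$ (where the factor $1/\log N$ in the time scale compensates the logarithmic growth of the variance) and $\hat c_2=0$ for $\theta<0$. For the reservoir pieces, a direct computation gives $L_N^l\eta(x)=(\kappa_N(\theta)/N^\theta)(\alpha-\eta(x))\sum_{y\le 0}p(x-y)$ and a mirror expression for $L_N^r$; combining this with the asymptotics $\sum_{y\le 0}p(x-y)\sim N^{-2}r^-(x/N)$ and its analogue on the right coming from \eqref{eq:V0V1}, the reservoir contribution rewrites, up to a vanishing error, as
\begin{equation*}
\frac{\Theta(N)\kappa_N(\theta)}{N^{\theta+2}}\cdot\frac{1}{N-1}\sum_{x\in\Lambda_N} G_s\bigl(\tfrac{x}{N}\bigr)\Bigl\{(\alpha-\eta^N_s(x))r^-\bigl(\tfrac{x}{N}\bigr)+(\beta-\eta^N_s(x))r^+\bigl(\tfrac{x}{N}\bigr)\Bigr\}.
\end{equation*}

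Matching the prefactors regime by regime then closes the argument. For $\theta\le 0$ the coefficient $\Theta(N)\kappa_N(\theta)/N^{\theta+2}$ equals $\kappa$; since $G$ is compactly supported in $(0,1)$ the Riemann sum converges to $\int_0^1 G_s(u)(V_0(u)-V_1(u)\rho_s(u))\,du$, so we recover $F_{Reac}=0$ with $\hat c_2=0$ (for $\theta<0$) or $\hat c_2=c_2$ (for $\theta=0$). For $\theta\in(0,1)$ the same prefactor vanishes; a further discrete summation by parts, exploiting the boundary asymptotics of $r^\pm$ together with $G\in\mathcal{S}_{Dir}$ vanishing at $\{0,1\}$, produces precisely the flux $-\int_0^t(\beta\partial_u G_s(1)-\alpha\partial_u G_s(0))\,ds$ appearing in $F_{Dir}$, while the boundary identities $\rho_s(0)=\alpha$ and $\rho_s(1)=\beta$ are obtained separately in Section \ref{boundaries} via a boundary replacement estimate. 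For $\theta=1$ the extra $\log N$ in $\kappa_N(1)$ cancels exactly the $\log N$ in $\Theta(N)$, so the reservoir sum survives as a genuine Robin-type boundary term; the Robin condition satisfied by $G\in\mathcal{S}_{Rob}(m,c_2)$ absorbs it into the Laplacian integral, giving $F_{Rob}=0$. For $\theta>1$ all reservoir contributions vanish and the Neumann class $\mathcal{S}_{Neu}$ closes $F_{Rob}=0$ with $\hat m=0$.

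The main obstacle is the $\theta=1$ regime, where three logarithmic factors must balance simultaneously: the truncated variance of $p(\cdot)$, the boosted coupling $\kappa_N(1)=\kappa\log N$, and the inverse time rescaling $1/\log N$. A careless extension of the discrete Laplacian to all of $\ZZ$, as in \cite{BGJO}, would inject a spurious $\log N$ into the boundary and destroy this balance, which is precisely why the extension-free convergence of Lemma \ref{L1conv} is indispensable: it isolates the Robin contribution from the bulk Laplacian cleanly. Once the integrand is identified in $\PP_{\mu_N}$-probability with the aid of the technical estimates collected in Section \ref{TEC}, a standard weak-convergence argument transfers the identity to $Q$-almost every trajectory, completing the characterization.
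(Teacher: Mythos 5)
Your overall strategy --- Dynkin's formula, the $L^2$ vanishing of the martingale, the splitting $L_N=L_N^0+L_N^l+L_N^r$, and the regime-by-regime matching of prefactors via Lemma \ref{L1conv}, Lemma \ref{thetaconv} and the replacement lemmas --- is the same as the paper's, and the cases $\theta\le 0$, $\theta=1$ and $\theta>1$ are described essentially correctly. There is, however, a genuine gap in the case $\theta\in(0,1)$: you attribute the Dirichlet flux term $-\int_0^t\{\beta\partial_uG_s(1)-\alpha\partial_uG_s(0)\}\,ds$ to ``a further discrete summation by parts, exploiting the boundary asymptotics of $r^\pm$''. This cannot work. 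For $\theta\in(0,1)$ the reservoir contribution carries the prefactor $\kappa N^{-\theta}(\log N)^{-1}$ per site and, after a Taylor expansion of $G$ (which vanishes at $0$ and $1$ for $G\in\mathcal{S}_{Dir}$), is of order $N^{-\theta}+(N^{\theta}\log N)^{-1}$; it vanishes entirely and produces no boundary term --- this is precisely what the paper shows. The flux in $F_{Dir}$ originates instead from the \emph{bulk} operator: Lemma \ref{L1conv} leaves, besides $c_2\Delta G$, the first-order remainder $\frac{1}{\log N}\sum_{x,y\in\Lambda_N}G'(\tfrac{x}{N})(y-x)p(y-x)\eta_s^N(x)$, which is $O(1)$ because $\frac{1}{\log N}\sum_{x\in\Lambda_N}\Theta_x^{\pm}\to c_2$ (Lemma \ref{thetaconv}); the replacement Lemma \ref{replacement1} turns it into $c_2G'(0)\overrightarrow{\eta}^{\epsilon N}_s(0)-c_2G'(1)\overleftarrow{\eta}^{\epsilon N}_s(N)$, and only after invoking the separately proven boundary identities $\rho_s(0)=\alpha$, $\rho_s(1)=\beta$ does this become the flux term. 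As written, your argument discards the term that actually produces the flux and credits the term that vanishes with producing it, so the $\theta\in(0,1)$ case is not established.

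A secondary, related imprecision: you assert that Lemma \ref{L1conv} yields $L^1$ convergence of the rescaled bulk operator to $\hat c_2\Delta G_s$. The lemma gives this only modulo the gradient remainder just described, which is negligible only when $G'$ vanishes at the boundary (the compactly supported test functions of the regimes $\theta\le 0$); for $\theta\in(0,1)$ and $\theta\ge 1$ that remainder must be retained, since it is exactly what generates the Dirichlet flux and the Robin/Neumann boundary contributions.
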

The proof of last proposition can be easily adapted from the one in 
	 \cite{BGJO} {(see Proposition 7.1 there)}  by using the results presented in the next sections.

\subsection{Heuristics for hydrodynamic equations}\label{1}
 The aim of this section is show that  density $\rho$  satisfies the integral equations given in the definition of weak solutions. To that end, 
the starting point is  Dynkyn's formula (see, for example, Lemma 5.1 of \cite{KL}) from which, for any $t \in [0,T]$ and any test function {$G \in C^{1,2}([0,T]\times[0,1])$},
\begin{equation}
\label{dynkyn}
M^N_t(G_t)=\langle \pi_t^N, G_t\rangle - \langle \pi_0^N, G_0\rangle - \int_0^t (\Theta(N)L_N+\partial_s)\langle \pi_s^N, G_s\rangle ds
\end{equation}
is a martingale with respect to the natural filtration $\{\mathcal{F}_t=\sigma(\eta_s : s\leq t)\}_{t \in [0,T]}$. Hereinafter, we avoid the dependence on time from the test function $G$, since it does not require any particular analysis and it would only make the notation heavier.  Above the notation $\langle \pi_t^N, G\rangle$ denotes the integral of the function $G$ with respect to the measure $\pi^N_t(du)$ given in \eqref{MedEmp}.
The idea is to prove that, for any $t \in [0,T]$, we can write the martingale in \eqref{dynkyn} as $$M^N_t(G)=\langle \pi_t^N, G\rangle - \langle \pi_0^N, G\rangle - \int_0^t \langle \pi_s^N, \mathcal{A}G\rangle ds,$$ plus some term which vanishes in $L^1$, as $N \rightarrow \infty$ and with a certain   operator $\mathcal{A}$ acting on the test function $G$.
In this way, heuristically speaking, 
	Under the hypothesis of convergence of the empirical measures $\{\pi^N_t\}_{N>1}$ to absolutely continuous measures with respect to the Lebesgue measure on $[0,1]$ with a density $\rho_{t}$,  by proving that the martingale $M_t^N(G)$ vanishes in $L^2$, then  we would obtain that the limit as $N\rightarrow \infty$ in $L^1$ of the last display would be equal to $$0=\langle \rho_t,G\rangle-\langle \rho_o,G\rangle-\int_0^t\int_0^1 (\mathcal{A}G)(u)\rho_s(u)duds.$$ This will correspond to the integral formulation of the weak solution of the hydrodynamic equations that we introduced in Section \ref{HE}.

In order to do this,  we first study the integral term in \eqref{dynkyn},
 which is the one relying on the specific form of the dynamics. Simple, but long  computations (see also  (3.2) of \cite{BGJO}), give  
\begin{equation}
\label{L}
\begin{split}
\Theta(N) L_N \langle \pi_s^N, G \rangle  =& \cfrac{\Theta(N)}{N-1} \sum_{x\in \Lambda_N}  (\mathcal L_NG)(\tfrac{x}{N}) \eta^N_s(x) \\
+& \cfrac{ \kappa_N(\theta) \Theta(N)}{(N-1)N^{\theta}} \sum_{x \in \Lambda_N}  G(\tfrac{x}{N}) \left( r_{N}^{-}(\tfrac{x}{N}) (\alpha- \eta^N_s(x))+   r_{N}^{+}(\tfrac{x}{N}){(\beta- \eta^N_s(x) )}\right),
\end{split}
\end{equation}
where 
\begin{equation}\label{eq:operador_LN}
\mathcal{L}_NG(\tfrac{x}{N}):=\sum_{y \in \Lambda_N}\big(G(\tfrac{y}{N})-G(\tfrac{x}{N})\big)p(y-x)
\end{equation}
and for any $x \in \Lambda_N$
\begin{equation}\label{eq:function_r_pm}
	r^-_N(\tfrac{x}{N}):=\sum_{y\geq x}p(y) \quad \text{ and } \quad 	r^+_N(\tfrac{x}{N}):=\sum_{y\leq N-x}p(y).
\end{equation}
Now, we analyze each regime of  $\theta \in \RR$ separately by taking the corresponding space of test functions and the respective time scale of \eqref{timescale}. The reader can combine these computations  and follow the proof of Proposition 7.1 of \cite{BGJO} in order to conclude.
\subsubsection{Case $\theta<0$}
Recall from \eqref{eq:reac} and  \eqref{timescale}  that in this regime we consider test functions $G \in C_c^{\infty}([0,1])$ and  $\Theta(N)=N^{2+\theta}$. The first term on the right-hand side of \eqref{L} can be studied by using the fact that $G^{(d)}(0)=G^{(d)}(1)=0$ for any $d\geq0$ and  also that both $G,G',\Delta G$  are uniformly bounded.  Indeed, \eqref{L} can be written as
\begin{equation}
\begin{split}
\frac{N^{2+\theta}}{N-1} \sum_{x \in \Lambda_N}\sum_{y \in \Lambda_N} \big(G(\tfrac{y}{N})-G(\tfrac{x}{N})\big)&p(y-x)\eta_s^N(x),
\end{split}
\end{equation}
and since  for any $x\in\Lambda_N $ we have  $\eta_t^N(x)\leq 1$
together with  a Taylor expansion on $G$, last display is bounded from above by a constant times
\begin{equation}
\begin{split}
 {\Big|}\frac{N^{1+\theta}}{N-1} \sum_{x \in \Lambda_N}G'(\tfrac{x}{N})\sum_{y=1-x}^{N-1-x} y^{-2} {\Big|}\lesssim N^{\theta} \sum_{x \in \Lambda_N} {\Big|}G'(\tfrac{x}{N}) {\Big|}x^{-1}\lesssim \frac{N^{\theta}}{\log(N)},
\end{split}
\end{equation}
plus a term that vanishes, as $N\to\infty$.
Observe that last display also  vanishes, as $N\to\infty$, since  $\theta<0$.
The last two terms on the right-hand side of \eqref{L}  are analyzed in the same way as  in  Section 3.1 of \cite{BGJO}, by using Lemma 3.3 of \cite{BJ}, the fact that $G \in C_c^{\infty}([0,1])$ and the hypothesis on the convergence of $\{\pi_{\cdot}^N\}_{N>1}$. We omit these computations here and leave the details to the reader.

\subsubsection{Case $\theta =0$}
In this regime  $G \in C_c^{\infty}([0,1])$ and  $\Theta(N)=N^{2+\theta}$.  The first term on the right-hand side of \eqref{L} can be treated using Lemma \ref{L1conv}. Thus,  we can rewrite it as $c_2\langle \Delta G, \pi_s^N\rangle $ plus terms vanishing, as $N\to\infty$. Indeed, since $G'(0)=G'(1)=0$, by looking at the statement of Lemma \ref{L1conv},  the term on the right-hand side of \eqref{stat} can be rewritten as
{\begin{equation*}
\cfrac{1}{\log(N)} \sum_{x,y \in \Lambda_N}\eta_s^N(x)G'(\tfrac{x}{N})(y-x)p(y-x).
\end{equation*}
From a Taylor expansion on $G$ (recall that $G'(0)=0$ since $G$ has compact support),  last expression  can be bounded from above by a constant times
\begin{equation*}
\begin{split}
  {\Big|}\cfrac{1}{N\log(N)} \sum_{x \in \Lambda_N}x\sum_{y =1+x}^{N-1+x}y^{-2} {\Big|}\lesssim \cfrac{1}{N\log(N)} \sum_{x \in \Lambda_N}1\lesssim \frac{1}{\log(N)},
\end{split}
\end{equation*} plus a term that vanishes, as $N\to\infty$. 
By taking $N\to\infty$, last display also vanishes.} To treat the second term on the right-hand side of \eqref{L}, we have to be a bit careful. Recall that  $\kappa_N(0)=\kappa \log(N)$.
 Therefore, since $G \in C_c^{\infty}([0,1])$, from  Lemma 3.3 of \cite{BJ}, the second term on the right-hand side of \eqref{L} can be rewritten as some term vanishing, as $N\to\infty$, plus
$
\kappa \langle G,V_0\rangle -\kappa \langle G V_1, \pi_s^N\rangle,
$
where $V_0$ and $V_1$ were defined in \eqref{eq:V0V1}.

\subsubsection{Case $\theta \in (0,1)$}
Recall from \eqref{eq:dif} that in this case we consider $G \in \mathcal{S}_{Dir}$  (and recall that we are ignoring the time dependence on $G$). Observe from \eqref{timescale} that $\Theta(N)=N^2/\log(N)$.
The first term on the right-hand side of $\eqref{L}$ is treated exactly as in the case $\theta=0$.
Let us now analyze  the second term on the right-hand side of \eqref{L} focusing on the term involving $r^-_N$, since the other one can be studied in a similar way. Following the same computations as in Section 3 of \cite{mio}, it is possible to show that $r^-_N$ is bounded by a term of order $x^{-2}$ and, since $\eta_s^N$ is uniformly bounded, we can bound from above the whole term by a constant times
\begin{equation}
\begin{split}
 {\Big|}\cfrac{ \kappa N}{N^{\theta}\log(N) } \sum_{x \in \Lambda_N}  G(\tfrac{x}{N})\frac{1}{x^2} {\Big|}=\cfrac{ \kappa}{N^{\theta}\log(N) }  {|}G'(0) {|}\sum_{x \in \Lambda_N} \frac{1}{x}+ \cfrac{ \kappa}{N^{\theta+1}\log(N) } \sum_{x \in \Lambda_N}  {|}G''(\xi) {|}
\end{split}
\end{equation}
for some $\xi \in (0,\tfrac{x}{N})$. Note that the first term on the right-hand side of the previous display is of order $N^{-\theta}$ and the second one is of order $(N^{\theta}\log(N))^{-1}$, hence the whole term vanishes, as $N\to\infty$.

\subsubsection{Case $\theta =1$}
Recall from \eqref{eq:Rob} that in this case we take test functions  {$G \in \mathcal{S}_{Rob}(m, c_2)$}, but we assume that they do not depend on time and recall from \eqref{timescale} that $\Theta(N)=N^2/\log(N)$.
The first term on the right-hand side of \eqref{L} can be analyzed using Lemma \ref{L1conv}. and it is equal to 
\begin{equation}\label{thet1_1}
\cfrac{c_2}{N-1}\sum_{x \in \Lambda_N}\eta_s^N(x)\Delta G(\tfrac{x}{N})+\cfrac{1}{\log(N)} \sum_{x,y \in \Lambda_N}G'(\tfrac{x}{N})(y-x)p(y-x)\eta_s^N(x),
\end{equation}
plus a term that vanishes, as $N\to\infty$. Let us focus on the term on the right-hand side of the previous display. By extending the sum in $y$ to the whole set $\mathbb{Z}$ and by the symmetry of  $p(\cdot)$, we can rewrite that term  as
\begin{equation}
\label{quirep}
\cfrac{1}{\log(N)} \sum_{x\in \Lambda_N}G'(\tfrac{x}{N})\eta_s^N(x)\big(\Theta_x^--\Theta_x^+\big)
\end{equation}
where 
\begin{equation}
\label{theta+-}
\Theta_x^-:=\sum_{y\leq 0}(x-y)p(x-y)\quad \text{ and } \quad \Theta_x^+:= \sum_{y\geq N}(y-x)p(y-x).
\end{equation}

{Let us now  introduce the following quantities. For  $s \in [0,T]$, $N>1$ and $\epsilon>0$, we define
\begin{equation}
\label{media}
\overrightarrow{\eta}^{\epsilon N}_s(0):=\frac{1}{\epsilon N}\sum_{x=1}^{\epsilon N}\eta^N_s(x) \quad \text{ and } \quad \overleftarrow{\eta}^{\epsilon N}_s(N):=\frac{1}{\epsilon N}\sum_{x=N-\epsilon N}^{N-1}\eta^N_s(x).
\end{equation}
Above  $\epsilon N$ should be understood as  $\lfloor \epsilon N\rfloor$ and   $\overrightarrow{\eta}^{\epsilon N}_{s}(0)=\langle \pi^N_{s},\iota_\epsilon^0\rangle$ and  $\overleftarrow{\eta}^{\epsilon N}_s(N)=\langle \pi^N_{s},\iota_\epsilon^1\rangle.$
where 
$\iota_\epsilon^0(u)=\tfrac{1}{\epsilon}{\bf{1}}_{(0,\epsilon)}(u)$ and $\iota_\epsilon^1(u)=\tfrac{1}{\epsilon}{\bf{1}}_{(1-\epsilon,1)}(u)$. The same remark holds also for the definition of the left average.  Heuristically,  since the limiting trajectory consists of absolutely continuous  measures with respect to the Lebesgue measure, we have that  for $r\in\{0,1\}$, $\langle \pi^N_{s},\iota_\epsilon^r\rangle$  converges, when $N\to\infty$, to $$\langle\pi_{s},\iota_\epsilon^r\rangle=\int_0^1 \rho_s(u)\iota_\epsilon^r(u)\,du$$ where  $\rho$ is the density profile that we aim to characterize. Then, by taking the limit as $\epsilon\to 0$ we obtain that $\langle\pi_{s},\iota_\epsilon^r\rangle$ converges to $\rho_s(r)$, as $\epsilon\to 0$,  for all $s\in[0,T]$.
Putting together these observations we get that
$\lim_{\epsilon\to 0}\lim_{N \rightarrow \infty} \langle \pi^N_{r},\iota_\epsilon^r\rangle=\rho_s(r) $ for $r\in\{0,1\}$ in  the $ L^1$ sense.}

Now, from Lemma \ref{replacement1}, we can rewrite \eqref{quirep} as 
\begin{equation}\label{restoLN}
\cfrac{1}{\log(N)} \overrightarrow \eta_s^{\epsilon N}(0)\sum_{x\in \Lambda_N}G'(\tfrac{x}{N})\Theta_x^--\cfrac{1}{\log(N)} \overleftarrow \eta_s^{\epsilon N}(N)\sum_{x\in \Lambda_N}G'(\tfrac{x}{N})\Theta_x^+
\end{equation}
plus some terms vanishing in $L^1$, as $N\to\infty$. Now, we are going to analyze in detail just the term on the left-hand side of last display, since the other one can be analyzed  analogously. By a Taylor expansion on $G'$ around $0$,  we can rewrite that term as
\begin{equation*}
\cfrac{1}{\log(N)}G'(0) \overrightarrow \eta_s^{\epsilon N}(0)\sum_{x\in \Lambda_N}\Theta_x^-+\cfrac{1}{N\log(N)} \overrightarrow \eta_s^{\epsilon N}(0)\sum_{x\in \Lambda_N}G''(\xi)\Theta_x^-
\end{equation*}
for some $\xi \in (0, \tfrac{x}{N})$. From Lemma \ref{thetaconv}, the leftmost term of last display can be rewritten as $c_2G'(0)\overrightarrow \eta_s^{\epsilon N}(0)$ plus a term vanishing, as $N\to\infty$. Clearly, since $G''$ is uniformly bounded, reasoning in the same way we can conclude that the rightmost term of the previous display vanishes, as $N\to\infty$. Analogously, it is possible to show that the rightmost term of \eqref{restoLN} can be written as $-c_2G'(1)\overleftarrow \eta_s^{\epsilon N}(N)$ plus terms vanishing in $L^1$, as $N\to\infty$.

Finally, we analyze the rightmost term on the right-hand side of \eqref{L}. We will focus on the part involving $r^-_N$, since the other one can be analyzed in an analogous way. Recall that in this regime $\kappa_N(1)=\kappa \log(N)$.
Then the term we need to study is equal to 
\begin{equation*}
\cfrac{\kappa N}{N-1} \sum_{x \in \Lambda_N}  G(\tfrac{x}{N}) r_{N}^{-}(\tfrac{x}{N}) (\alpha- \eta^N_s(x)).
\end{equation*}
{From Remark \ref{RMK1},  the time  integral from $0$ to $t$ of this term can be replaced  by the same time integral of }
\begin{equation*}
\cfrac{\kappa N}{N-1} \sum_{x \in \Lambda_N}  G(\tfrac{x}{N}) r_{N}^{-}(\tfrac{x}{N}) (\alpha-\overrightarrow \eta_s^{\epsilon N}(0) ),
\end{equation*}
plus a term vanishing in $L^1$, as $N\to\infty$. Then, by a Taylor expansion on $G$ around $0$ we can rewrite last display as
\begin{equation}
(\alpha-\overrightarrow \eta_s^{\epsilon N}(0) )\kappa \sum_{x \in \Lambda_N} r_{N}^{-}(\tfrac{x}{N})\Big[G(0)+\tfrac{x}{N}  G'(\xi)
\Big]
\end{equation}
for some $\xi \in (0,\tfrac{x}{N})$. {Observe that, since $r^-_N$ is bounded from above by a term of order $x^{-2}$, the second term of the previous display can be bounded from above by:
\begin{equation}
 {\Big|}\cfrac{\kappa(\alpha-\overrightarrow \eta_s^{\epsilon N}(0) ) }{N} \sum_{x \in \Lambda_N} xr_{N}^{-}(\tfrac{x}{N})G'(\xi) {\Big|}\lesssim \cfrac{\kappa }{N} \sum_{x \in \Lambda_N} xr_{N}^{-}(\tfrac{x}{N})\lesssim \frac{\log(N)}{N},
\end{equation}
which vanishes, as $N\to\infty$.}
 
Finally, thanks to the fact that $\sum_{x \in \Lambda_N}r^-_N(\tfrac{x}{N})$ converges to $m$, as $N\to\infty$ (for details see equation (3.7) of \cite{BGJO}), the whole term that we are analyzing can be replaced by
$
\kappa mG(0)(\alpha -\overrightarrow \eta^{\epsilon N}_s(0))
$, plus some error which vanishes, as $N \rightarrow \infty$.
 In a similar way it is not difficult to show that the term involving $r^+_N$ in last term of \eqref{L} can be rewritten as $
\kappa mG(1)(\beta -\overleftarrow \eta^{\epsilon N}_s(1))
$, plus terms vanishing in $L^1$, as $N\to\infty.$ Putting all this together, we conclude that the last integral in \eqref{dynkyn} can be rewritten, under the hypothesis of convergence of the sequence of empirical measures and the properties of functions in  { $\mathcal{S}_{Rob}(m,c_2)$} , as
\begin{equation*}
\int_0^t\int_0^1c_2\Delta G(u)\rho_s(u) duds ,
\end{equation*}
plus terms vanishing in $L^1$, as $N\to\infty$.

\subsubsection{Case $\theta>1$}
As in the previous case, recall from \eqref{eq:Rob} that the test functions  {$G \in \mathcal{S}_{Neu}$}  and we assume that they do not depend on time, and recall from \eqref{timescale} that $\Theta(N)=N^2/\log(N)$. 
The analysis of the first term in \eqref{L} is the same as in the case $\theta=1$. It remains  to study  the last term on the right-hand side of \eqref{L}. We focus on the part involving $r^-_N$, since the other one can be studied in an analogous way. Recall that in this regime $\kappa_N(\theta)=\kappa$.
By using the fact that 
the sum $\sum_{x \in \Lambda_N}r^-_N(\tfrac{x}{N})$ is convergent, $G$ and $(\alpha-\eta_s^N(x))$ are uniformly bounded, we have that
\begin{equation*}
 {\Big|}\cfrac{ \kappa N^{1-\theta}}{(N-1)\log(N)} \sum_{x \in \Lambda_N}  G(\tfrac{x}{N}) r_{N}^{-}(\tfrac{x}{N}) (\alpha- \eta^N_s(x)) {\Big|}\lesssim \frac{N^{1-\theta}}{\log(N)}
\end{equation*}
which  vanishes, as $N\to\infty$.

 {Now, we introduce the ingredients in order to prove properly some of the results above. }

\subsection{Dirichlet form and relative entropy}
 For a probability measure $\mu$ on $\Omega_N$ and a density function $f$ with respect to $\mu$, we introduce  the quadratic form  given  by $D_N:=D^0_N+D^l_N+D^r_N$ where 
\begin{eqnarray} \label{D}
&D_{N}^{0}(\sqrt{f},\mu):=\tfrac{1}{2}\int\sum_{x,y\in\Lambda_N}p(y-x)\left(\sqrt{f(\sigma^{x,y}\eta)}- \sqrt{f(\eta)}\right)^{2} d\mu, \\
&D_{N}^{l}(\sqrt{f},\mu):=\tfrac{\kappa_N(\theta)}{N^{\theta}}\int\sum_{x\in\Lambda_N}r_N^-(\tfrac{x}{N})c_{x}(\eta;\alpha)\left( \sqrt {f{(\sigma^{x}\eta)}}-\sqrt {f(\eta)}\right)^{2}d\mu,\\&
D_{N}^{r}(\sqrt{f},\mu):=\tfrac{\kappa_N(\theta)}{N^{\theta}}\int\sum_{x\in\Lambda_N}r_N^+(\tfrac{x}{N})c_{x}(\eta;\beta)\left( \sqrt {f{(\sigma^{x}\eta)}}-\sqrt {f(\eta)}\right)^{2}d\mu.
\end{eqnarray}
In equation (5.4) of \cite{BGJO} it was obtained  a relationship between $D_N(\sqrt{f},\mu)$ and the Dirichlet form $\langle L_N\sqrt f, \sqrt f\rangle_{\mu}$  {for a certain choice of $\mu$. We recall it here.} 
 {Let $\nu_{h(\cdot)}$ be} the Bernoulli product measure  defined  by its marginals
		 \begin{equation}\label{prod}
		 \nu_{h(\cdot)}(\eta \in \Omega_N : \eta(x)=1)=h(\tfrac{x}{N})\end{equation} with  $h: [0,1]\rightarrow [0,1]$ a  Lipschitz  function  satisfying $h(0)=\alpha$ and $h(1)=\beta$. Then  
\begin{equation}
\label{estimateDirLip}
\langle L_N\sqrt f, \sqrt f\rangle_{ {\nu_{h(\cdot)}}}\lesssim - \frac{1}{4}D_N(\sqrt f, \nu_{h(\cdot)})+ \frac{\log(N)}{N} + \frac{1}{N^{\theta}}.
\end{equation}
If $h$ is  a constant function then
\begin{equation}
\label{estimateDir}
\langle L_N\sqrt f, \sqrt f\rangle_{ {\nu_{h(\cdot)}}}\lesssim - \frac{1}{4}D_N(\sqrt f, \nu_{h(\cdot)}) + \frac{1}{N^{\theta}}.
\end{equation}

	We state now a relative entropy estimate which is fundamental in the proof of the next results. For two measures $\mu$ and $\tilde \mu$ both defined in $\Omega_N$, the relative entropy between    $\mu$ and $\tilde \mu$ is denoted by $H(\mu|\tilde \mu)$ and it2 is defined by $H(\mu|\tilde \mu):=\sum_{\eta\in \Omega_N}\mu(\eta)\log\bigg(\frac{\mu(\eta)}{\tilde \mu (\eta)}\bigg).$ In the case of an exclusion process evolving on a finite state space, it is easy to show that, for any product measure $\nu_{h(\cdot)}$ and any probability measure $\mu_N$ on $\Omega_N$, the following estimate holds
	\begin{equation}\label{entropy estimate}
	H(\mu_N|\nu_{h(\cdot)}) \, {\leq} \,K_0 N,
	\end{equation}
	where $K_0$ is a positive constant depending only on $\alpha$ and $\beta$. The proof of the previous estimate can be found, for example, in equation (5.1)  of \cite{BGJO}.

\subsubsection{Proof of item (1) of Definition \ref{reacdif}}

The proof of this item  follows Section 6.2 of \cite{BGJO}, so we omit many details.  Observe that
	\begin{equation*}
	\begin{split}
	&\mathbb{E}_{\mu_N}\bigg[\int_0^T N\sum_{x \in \Lambda_N} G(\tfrac{x}{N})r^-_N(\tfrac{x}{N})(\alpha - \eta_s^N(x))ds\bigg]
	\end{split}
	\end{equation*}
	from  {the entropy estimate and Jensen's inequality, last dispaly can be bounded from above  by
	\begin{equation}\label{exp}
	\begin{split}
	\frac{H(\mu_N|\nu_{h(\cdot)})}{N}+\frac{1}{N}\log \mathbb{E}_{\nu_{h(\cdot)}}\bigg[e^{N\big|\int_0^T N\sum_{x \in \Lambda_N} G\Big(\tfrac{x}{N}~\Big)r^-_N\Big(\tfrac{x}{N}\Big)(\alpha - \eta_s^N(x))ds\big|}\bigg].
	\end{split}
	\end{equation}
Now we can use \eqref{entropy estimate}. Moreover, by removing the absolute value in the exponential in last display (which is possible thanks to the fact that $e^{|x|}\leq \max 
\{e^x,e^{-x}\}$ and since
$$\limsup_N \log(a_N+b_N) \leq \max\{\limsup_N \log (a_N), \limsup_N \log(b_N)\}$$
and Feynman-Kac formula  we can bound \eqref{exp} from above by }  $K_0$ plus 
	\begin{equation}\label{lest}
	\begin{split}
	\sup_f \bigg\{\int N\sum_{x \in \Lambda_N} G(\tfrac{x}{N})r^-_N(\tfrac{x}{N})(\alpha - \eta(x))f(\eta) d \nu_{h(\cdot)}+\tfrac{\Theta(N)}{N}\langle L_N \sqrt f, \sqrt f\rangle_{\nu_{h(\cdot)}} \bigg\}.
	\end{split}
	\end{equation}
	Above the supremum {is} carried over all the density functions with respect to $\nu_{h(\cdot)}$. 
	As in the previous proof,  we can rewrite the first term inside the supremum above as
	\begin{equation}\label{boh}
	\begin{split}
	&\int \frac{N}{2}\sum_{x \in \Lambda_N} G(\tfrac{x}{N})r^-_N(\tfrac{x}{N})(\alpha - \eta(x))(f(\eta)-f(\sigma^{x}\eta))d\nu_{h(\cdot)}\\ +&\int \frac{N}{2}\sum_{x \in \Lambda_N} G(\tfrac{x}{N})r^-_N(\tfrac{x}{N})(\alpha - \eta(x))f(\sigma^{x}\eta)\Big(1-\tfrac{\nu_{h(\cdot)}(\sigma^{x}\eta)}{\nu_{h(\cdot)}(\eta)} \Big)d\nu_{h(\cdot)}.
	\end{split}
	\end{equation}
By using the inequality $ab \leq \frac{Aa^2}{2}+\frac{b^2}{2A}$ on the first term of last display,  with $$A=\frac{N^{2+\theta}}{\Theta(N)c_x(\eta;\alpha)\kappa_N(\theta)},$$ we  bound it from above by a constant times
	\begin{equation}\label{boh2}
	\begin{split}
     &\int\frac{N^{3+\theta}}{4\Theta(N)\kappa_N(\theta)}\sum_{x \in \Lambda_N} G(\tfrac{x}{N})^2r^-_N(\tfrac{x}{N})(\alpha - \eta(x))^2(\sqrt f(\eta)+\sqrt f(\sigma^{x}\eta))^2d\nu_{h(\cdot)}\\+& \int \frac{\Theta(N)\kappa_N(\theta)}{4N^{1+\theta}}\sum_{x \in \Lambda_N}r^-_N(\tfrac{x}{N})c_x(\eta;\alpha)(\sqrt f(\eta)-\sqrt f(\sigma^{x}\eta))^2d\nu_{h(\cdot)}.
     \end{split}
	\end{equation}
    Recall \eqref{D} and observe that the last term above can be rewritten as
    \begin{equation}
    \frac{\Theta(N)}{4N}D_N^l(\sqrt{f}, \nu_{h(\cdot)})\leq  \frac{\Theta(N)}{4N}D_N(\sqrt{f}, \nu_{h(\cdot)}).
    \end{equation}
    So, this term summed with the last one inside the supremum in \eqref{lest}, thanks to \eqref{estimateDirLip}, is equal to some term that vanishes, as $N\rightarrow \infty$. {Observe now that there exists} a constant $C>0$ such that the first term of \eqref{boh2} is equal to
    \begin{equation}
    \begin{split}
    &\int_{\Omega_N}\frac{N}{4\kappa}\sum_{x \in \Lambda_N} G(\tfrac{x}{N})^2r^-_N(\tfrac{x}{N})(\alpha - \eta(x))^2(\sqrt f(\eta)+\sqrt f(\sigma^{x}\eta))^2d\nu_{h(\cdot)}\\&\lesssim \frac{1}{4N\kappa}\sum_{x \in \Lambda_N} N^2G(\tfrac{x}{N})^2r^-_N(\tfrac{x}{N})  \int_{\Omega_N}(\sqrt f(\eta)+\sqrt f(\sigma^{x}\eta))^2d\nu_{h(\cdot)}\\ &\lesssim \frac{C}{N}\sum_{x \in \Lambda_N} N^2G(\tfrac{x}{N})^2r^-_N(\tfrac{x}{N}), 
    \end{split}
    \end{equation}
    because $(\alpha - \eta(x))^2$ is uniformly bounded and $f$ is a density with respect to $\nu_{h(\cdot)}$. Then, thanks to Lemma 3.3 of \cite{BJ}, in the limit $N\rightarrow \infty$, we can bound from above the first term of \eqref{boh2} by $C\int_0^1 r^-(u)G(u)^2du$.
	The second term of \eqref{boh} is of order {$O(N^{-1})$} since  it is possible to show  that $\Big(1-\tfrac{\nu_{h(\cdot)}(\sigma^{x}\eta)}{\nu_{h(\cdot)}(\eta)} \Big)$ is of order $N^{-1}$ (see, for example, Section 5 of \cite{BGJO}).
	
	Summarizing, taking the limit $N \rightarrow \infty$ in \eqref{exp} we get
	\begin{equation*}
	\mathbb{E}\bigg[\int_0^T \int_0^1 \frac{G(u)(\alpha- \rho_s(u))}{|u|^2}ds du-\int_0^T \int_0^1 \frac{G(u)^2}{|u|^2} ds du\bigg]\leq C'
	\end{equation*}
	for some positive constant $C'$ which {depends} on $C$. The rest of the proof is completely analogous to the one given in Section 6.2 of \cite{BGJO} so we omit it and we ask the reader to fill in the details.

\subsubsection{Proof of item (3) of Definition \ref{reacdif} and item (2) of Definition \ref{dif}}\label{boundaries}
Observe that for $\theta<1$ we need to show that the weak solutions satisfy Dirichlet boundary conditions.
When $\theta<0$ the solution is explicit so there is nothing to verify, see Remark 2.3 of \cite{BGJO}. In the regime $0\leq \theta\leq 1$ one can follow the arguments of  Section 5.3 in \cite{BGJO} that also fit our case. We leave the details to the reader.

\section{Tightness}\label{2}
Here we show that the sequence of measures $\{Q_N\}_{N>1}$ is tight {and so, in particular, it has limit points.} 
\begin{prop}\label{TIGHT}
	The sequence of measures $\{Q_N\}_{N>1}$ is tight in $\mathcal{D}([0,T],\mathcal{M}^+)$ with respect to the  {Skorokhod} topology.
\end{prop}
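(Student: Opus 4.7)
The plan is to follow the classical two-step strategy for tightness of measure-valued Markov processes, combined with a boundary-truncation trick that sidesteps the technicalities arising from the long-range reservoirs. First, invoking a projection argument (see, e.g., Proposition 4.1.7 of \cite{KL}), it suffices to show tightness of the real-valued processes $\{\langle \pi^N_\cdot, G\rangle\}_{N>1}$ in $\mathcal{D}([0,T],\mathbb{R})$ for every $G$ in a countable dense subset of $C([0,1])$; I would take $G\in C^\infty([0,1])$. Marginal tightness is immediate from $|\langle \pi^N_t,G\rangle|\leq \|G\|_\infty$, so it remains to verify the modulus-of-continuity part of Aldous' criterion for each such $G$.

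For $G\in C_c^\infty((0,1))$, Dynkin's formula \eqref{dynkyn} combined with Markov's inequality reduces the problem to bounding the predictable quadratic variation of the martingale and the drift integral. The quadratic variation is computed from the elementary jump rates: each jump of $\langle \pi^N,G\rangle$ has size at most $\|G\|_\infty/(N-1)$, the total bulk rate is $O(\Theta(N)\cdot N)$, and the reservoir rate is at most $O(\kappa_N(\theta)\Theta(N)/N^\theta)$ after using $\sum_x r_N^\pm(\tfrac{x}{N})=O(1)$. A direct computation gives $\mathbb{E}_{\mu_N}[\langle M^N(G)\rangle_T]\lesssim 1/N$ in every regime, so Doob's inequality eliminates this contribution. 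For the drift, I would use \eqref{L}: the bulk piece is uniformly bounded by $C\|G\|_{C^2}$ thanks to Lemma \ref{L1conv}, and the reservoir piece is $O(1)$, since the compact support of $G$ away from $\{0,1\}$ combined with the $x^{-2}$-decay of $r_N^\pm(\tfrac{x}{N})$ forces the relevant sum to be $O(1/N)$, while the prefactor $\kappa_N(\theta)\Theta(N)/((N-1)N^\theta)$ is at most $O(N)$ in the worst case. This yields a deterministic bound $|\Theta(N)L_N\langle \pi^N_s,G\rangle|\leq C_G$ uniformly, hence $|\int_\tau^{\tau+\bar\tau}\Theta(N)L_N\langle \pi^N_s,G\rangle\,ds|\leq C_G\bar\tau$, which is small with $\delta$.

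To extend the estimate to a general $G\in C^\infty([0,1])$ I would use a boundary-truncation argument. Given $\epsilon>0$, decompose $G=G_0+G_\epsilon$ with $G_0\in C_c^\infty((\epsilon,1-\epsilon))$ matching $G$ on $[\epsilon,1-\epsilon]$ and $G_\epsilon$ supported on $[0,\epsilon]\cup[1-\epsilon,1]$ with $\|G_\epsilon\|_\infty\leq \|G\|_\infty$. The uniform mass bound $\pi^N_t([0,1])\leq 1$ yields the $N$-uniform estimate $|\langle \pi^N_{\tau+\bar\tau},G_\epsilon\rangle-\langle \pi^N_\tau,G_\epsilon\rangle|\leq 4\epsilon\|G\|_\infty$, whereas the previous paragraph controls the oscillation of $\langle \pi^N_\cdot,G_0\rangle$. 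Choosing $\epsilon$ first small and then sending $\delta\to 0$ establishes Aldous' criterion for $G$ and hence for the full measure-valued process.

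The main obstacle will be the precise matching between the critical time scale $\Theta(N)=N^2/\log N$ and the reservoir strengths $\kappa_N(\theta)$ defined in \eqref{kappa}: the logarithmic factors have to conspire correctly, so that the prefactors in \eqref{L} produce $O(1)$ drift bounds uniformly across all five regimes of $\theta$ listed in Theorem \ref{theo:hydro_limit}. This bookkeeping, anticipated in Remark \ref{kappan}, is where the critical nature of $\gamma=2$ manifests itself at the level of tightness; once executed carefully, the argument fits the standard template, and the boundary-truncation step removes the need to invoke entropy or Feynman-Kac estimates at this stage.
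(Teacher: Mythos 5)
Your proposal is correct and follows essentially the same route as the paper: reduce to the real-valued processes $\langle \pi^N_\cdot,G\rangle$, verify Aldous' criterion by showing the drift $\Theta(N)L_N\langle\pi^N_s,G\rangle$ is uniformly bounded for compactly supported $G$ (using the $x^{-2}$-decay of $r_N^\pm$ and the worst-case prefactor $\kappa_N(\theta)\Theta(N)/N^{\theta+1}=O(1)$), control the martingale through its quadratic variation, and then pass to general continuous $G$ by an approximation that exploits $\pi^N_t([0,1])\leq 1$ --- your boundary truncation is just an explicit version of the $L^1$-approximation the paper borrows from \cite{BGJO}. The one place where your sketch as written does not close is the quadratic variation: the crude bound (jump size $\|G\|_\infty/(N-1)$ times total bulk rate $O(\Theta(N)N)$) gives $O(\Theta(N)/N)\sim N/\log N$, which diverges; to reach your (correct) claim $\mathbb{E}_{\mu_N}[\langle M^N(G)\rangle_T]\lesssim 1/N$ one must use the refined jump size $|G(\tfrac{y}{N})-G(\tfrac{x}{N})|/(N-1)\lesssim \|G'\|_\infty|y-x|/N^2$ together with $\sum_{x,y\in\Lambda_N}(y-x)^2p(y-x)\lesssim N\log N$, exactly as in the paper's estimate $O(\Theta(N)\log(N)/(N-1)^3)$.
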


\begin{proof} Here we follow the proof of Proposition 4.1 of  \cite{BGJO} and, for that reason, many details are omitted. Observe that, as in  \cite{BGJO}, we prove the result for test functions $G \in C_c^2([0,1])$, and by using an $L^1$ approximation (see \cite{BGJO} for details), we  then extend the result to any $G \in C^0 ([0, 1])$. 
	
		 {The proof follows  by showing the next two items:
\begin{align*}
&(1)\quad \lim_{\delta \rightarrow 0} \limsup_{N \rightarrow \infty} \sup_{\tau \in \mathcal{T}_T, \bar{\tau} \leq \delta} \mathbb{E}_{\mu_N} \left[ \Big| \int_{\tau}^{\tau+ \bar{\tau}} \Theta(N) L_{N}\langle \pi_{s}^{N},G\rangle ds \Big| \right] = 0\\
&(2)\quad\lim_{\delta \rightarrow 0} \limsup_{N \rightarrow \infty} \sup_{\tau \in \mathcal{T}_T, \bar{\tau} \leq \delta} \mathbb{E}_{\mu_N} \left[ \left(  M_{\tau}^{N}(G) -  M_{\tau+\bar{\tau}}^{N}(G) \right)^2 \right] = 0.
\end{align*}
	Above, $\mathcal{T}_T$ is the set of stopping times bounded by $T$ and we assume that all the stopping times are bounded by $T$.}

		 {The item (1) is a consequence of the fact } that  there exists a constant $C>0$ such that, for any $\theta \in \mathbb{R}$ and any $N>1$,
	$
    |\Theta(N) L_{N}\langle \pi_{s}^{N},G\rangle|<C.
  $
 	 {To prove it, recall \eqref{L}. Then,}   we have to show that 
    \begin{equation}
    \label{abs3}
    \begin{split}
    &\cfrac{\Theta(N)}{N-1} \sum_{x\in \Lambda_N}  \big|(\mathcal L_NG)(\tfrac{x}{N})\big|
    + \cfrac{ \kappa_N(\theta) \Theta(N)}{(N-1)N^{\theta}} \sum_{x \in \Lambda_N}  \big|G(\tfrac{x}{N})\big| \left| r_{N}^{-}(\tfrac{x}{N})+   r_{N}^{+}(\tfrac{x}{N})\right|<C.
    \end{split}
    \end{equation}
    Let us start bounding the {first term on the left-hand side} of last display. {By a Taylor expansion on $G$, we can bound that term from above by a constant times
    \begin{equation*}
    \begin{split}
    	\cfrac{\Theta(N)}{N^2} \sum_{x\in \Lambda_N}|G'(\xi)|\sum_{y \in \Lambda_N}|y-x|p(y-x)
    \end{split}
    \end{equation*}
    for some $\xi \in (0,\tfrac{x}{N})$. Last display can be bounded from above by a constant times
    \begin{equation*}
    \begin{split}
    \cfrac{\Theta(N)}{N^2} &\sum_{x\in \Lambda_N}|G'(\tfrac{x}{N})|\sum_{y =x+1}^{N-1+x}yp(y)\lesssim \frac{\log(N)\Theta(N)}{N^2}.
    \end{split}
    \end{equation*}
    So,  for any value of $\Theta(N)$ given in \eqref{timescale} the first term on the left-hand side of \eqref{abs3} is of order $O(1)$.
    Let us now analyze the term on the right-hand side of \eqref{abs3}. First observe that, since $G$ has compact support in $(0,1)$ we have that $r^{-}_N(\tfrac{x}{N})\lesssim x^{-2}$ and $r^+_N(\tfrac{x}{N})\lesssim (N-x)^{-2}$ (see Appendix B of \cite{BJ} for details on this estimate). Hence, the whole term can be bounded from above  {by} \small
    \begin{equation}\label{rest}
    \begin{split}
    &\cfrac{ \kappa_N(\theta) \Theta(N)}{(N-1)N^{\theta}} \sum_{x \in \Lambda_N}  \big|G(\tfrac{x}{N})\big|\big[x^{-2}+(N-x)^{-2}\big]\lesssim\cfrac{ \kappa_N(\theta) \Theta(N)}{N^{\theta+1}} \sum_{x \in \Lambda_N}  x^{-2}\big[|G(\tfrac{x}{N})\big|+|G(\tfrac{N-x}{N})\big|\big].
    \end{split}
    \end{equation} \normalsize
    Now, we use the fact that $G$ has compact support to perform a Taylor expansion of $G(\tfrac{x}{N})$ around $0$ and of $G(\tfrac{N-x}{N})$ around 1, to get that the term on the right-hand side of \eqref{rest} is bounded from above by a constant times
    	$\|G''\|_{\infty}\cfrac{ \kappa \Theta(N)}{N^{\theta+3}} \sum_{x \in \Lambda_N} 1.$ Note that, for any value of $\Theta(N)$ this term vanishes, as $N$ goes to infinity.
So, we showed \eqref{abs3}.

To prove 	 {item (2), we  use the fact that}  $$\displaystyle\left( M^{N}_{t}(G)\right)^{2}-\int^{t}_{0} \Theta(N)\left[ L_{N} \langle\pi^{N}_{s},G \rangle^{2}- 2\langle\pi^{N}_{s},G \rangle L_{N} \langle\pi^{N}_{s},G \rangle\right]ds,$$
   is a martingale with respect to the natural filtration  $\{\mathcal{F}_{t}\}_{t\in [0,T]}$. If we manage to prove that the integrand in the last display is uniformly bounded in $N$, the proof ends. It is possible to bound this integrand} from above by a constant times
    \begin{equation*}
    \label{T6}
    \begin{split}
    &\dfrac{\Theta(N)}{(N-1)^{4}} \sum_{x,y\in\Lambda_{N}} (x-y)^{2}p(x-y)
    +\dfrac{\kappa_N(\theta) \Theta(N)}{(N-1)^{2}N^{\theta}}\sum_{ x\in\Lambda_{N}} \left(G\left(\tfrac{x}{N}\right)\right)^{2} \left( r_{N}^{-}(\tfrac{x}{N}) +r_{N}^{+}(\tfrac{x}{N})  \right).\\
    \end{split}
    \end{equation*}
    The  leftmost term in the previous display is of order 
   {$
   O\Big(\tfrac{\Theta(N)\log(N)}{(N-1)^{3}}\Big),
   $}
   so, for any value of {$\Theta(N)$}, it is of lower order than $O(1)$. To  {estimate the rightmost term in last display, we proceed as in \eqref{rest} to bound it from above by a constant times
   	\begin{equation}
   	\dfrac{\kappa_N(\theta) \Theta(N)}{(N-1)^{2}N^{\theta}}\sum_{ x\in\Lambda_{N}} x^{-2}\big[G(\tfrac{x}{N})^2+G(\tfrac{N-x}{N})^2\big])\lesssim 	\dfrac{\kappa_N(\theta) \Theta(N)}{(N-1)^{2}N^{\theta}} {\lesssim 1},
   	\end{equation}
   	for any value of $\theta$.}
  This concludes the proof.
\end{proof}
\section{Technical lemmas}\label{TEC}
In this section we collect all the results that we used in the previous arguments. 
\subsection{Convergence of discrete operators}
\begin{lem}
	\label{thetaconv}
	Recall \eqref{theta+-}. Then,
	\begin{equation}
	\lim_{N\rightarrow \infty}\frac{1}{\log(N)}\sum_{x\in \Lambda_N}\Theta_x^{\pm}=c_2.
	\end{equation}
\end{lem}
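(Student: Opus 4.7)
The plan is to compute each sum $\sum_{x\in\Lambda_N}\Theta_x^\pm$ explicitly up to $O(1)$ error and divide by $\log(N)$. First I would unpack the definitions using the explicit form $p(z) = c_2/|z|^3$ and make a change of summation variable. For $\Theta_x^-$, setting $z = x - y$ with $y \leq 0$ gives $z \geq x$, so
\begin{equation*}
\Theta_x^- \;=\; \sum_{z \geq x} z\, p(z) \;=\; c_2 \sum_{z \geq x} \frac{1}{z^2}.
\end{equation*}
By the symmetry $x \leftrightarrow N-x$ (using $z = y - x$ for $y \geq N$), the same reduction gives $\Theta_x^+ = c_2 \sum_{z \geq N-x} z^{-2}$, so $\sum_{x \in \Lambda_N}\Theta_x^+ = \sum_{x \in \Lambda_N}\Theta_x^-$ and it suffices to treat the minus case.

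Next, I would swap the order of summation in
\begin{equation*}
\sum_{x=1}^{N-1}\Theta_x^- \;=\; c_2 \sum_{x=1}^{N-1}\sum_{z \geq x}\frac{1}{z^2} \;=\; c_2 \sum_{z \geq 1}\frac{\min(z,N-1)}{z^2}.
\end{equation*}
Splitting the $z$-sum at $z = N-1$ yields
\begin{equation*}
\sum_{z=1}^{N-1}\frac{1}{z} \;+\; (N-1)\sum_{z \geq N}\frac{1}{z^2}.
\end{equation*}
The first piece is $\log(N) + O(1)$ by the standard harmonic estimate, and the second piece is $(N-1)\cdot O(1/N) = O(1)$ since $\sum_{z\geq N}z^{-2} \leq \int_{N-1}^\infty u^{-2}\,du = 1/(N-1)$.

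Combining these, $\sum_{x \in \Lambda_N}\Theta_x^- = c_2 \log(N) + O(1)$, and the same identity holds for $\Theta_x^+$. Dividing by $\log(N)$ and taking $N \to \infty$ produces the stated limit $c_2$. No real obstacle arises: the only subtle point is to verify that the tail contribution $(N-1)\sum_{z\geq N}z^{-2}$ remains bounded (and does not produce an extra $\log$), which is immediate from the $z^{-2}$ decay built into $p(\cdot)$ at $\gamma = 2$. This bounded tail is exactly what distinguishes the critical exponent $\gamma=2$ from the super-diffusive regime $\gamma<2$ where the analogous computation would diverge polynomially rather than logarithmically.
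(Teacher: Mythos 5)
Your proof is correct and follows essentially the same route as the paper: interchange the order of summation (Fubini), split the $z$-sum at $z\approx N$ into a harmonic piece $\sum_{z\le N}z^{-1}=\log(N)+O(1)$ and a bounded tail $(N-1)\sum_{z\ge N}z^{-2}=O(1)$. The only (harmless) difference is that you make the symmetry $\Theta_x^+=\Theta_{N-x}^-$ explicit, whereas the paper simply declares the plus case analogous.
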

\begin{proof}
	We give in details the proof for the case $\Theta_x^-$, the case with $\Theta_x^+$ is analogous.
	Observe that
	\begin{equation*}
	\frac{1}{\log(N)}\sum_{x\in \Lambda_N}\Theta_x^{-}=\frac{1}{\log(N)}\sum_{x\in \Lambda_N}\sum_{z\geq x}zp(z)=\frac{1}{\log(N)}\bigg[\sum_{z=1}^N\sum_{x=1}^z zp(z)+\sum_{z>N}\sum_{x=1}^{N-1} zp(z)\bigg],
	\end{equation*}
	where the last equality is due to Fubini's theorem. Note that the rightmost term in last display can be estimated by
	\begin{equation*}
	\frac{1}{\log(N)}\sum_{z>N}\sum_{x=1}^{N-1} zp(z)\lesssim \frac{N-1}{\log(N)}\sum_{z>N} z^{-2}\lesssim \frac{1}{\log(N)}
	\end{equation*}
	so, it vanishes, as $N\to\infty$. Now observe that
	\begin{equation*}
	\begin{split}
	\frac{1}{\log(N)}\sum_{z=1}^N\sum_{x=1}^z zp(z)-c_2=\frac{1}{\log(N)}\sum_{z=1}^Nz^2p(z)-c_2=c_2\bigg[\frac{1}{\log(N)}\sum_{z=1}^N\frac{1}{z}-1\bigg],
	\end{split}
	\end{equation*}
	then thanks to the estimate $\sum_{n=1}^{k}n^{-1}\leq \log k +1$, it is easy to conclude.
\end{proof}

\begin{lem}\label{L1conv}
	For any  {$G$ sufficiently smooth} the following limit holds:
	\begin{equation}\label{stat}
	\begin{split}
\lim_{N\rightarrow \infty} \sup_{x \in \Lambda_N}\bigg|\cfrac{N^2}{\log(N)}\mathcal{L}_NG&(\tfrac{x}{N})-c_2\Delta G(\tfrac{x}{N})-G'(\tfrac{x}{N})\cfrac{N}{\log(N)}\sum_{y \in \Lambda_N}(y-x)p(y-x)\bigg|=0,
\end{split}
	\end{equation}
	where $\mathcal{L}_N$ was defined in \eqref{eq:operador_LN}.
\end{lem}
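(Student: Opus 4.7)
The natural route is a second-order Taylor expansion of $G$ around $x/N$, together with a direct evaluation of the moments of the truncated transition kernel. Writing $G(y/N)-G(x/N) = G'(x/N)\,\tfrac{y-x}{N}+\tfrac{G''(x/N)}{2}\tfrac{(y-x)^2}{N^2}+R_N(x,y)$, with Lagrange remainder $|R_N(x,y)|\le \tfrac{\|G'''\|_\infty}{6}|y-x|^3/N^3$, then multiplying by $p(y-x)$ and summing over $y\in\Lambda_N$ gives $\mathcal{L}_N G(\tfrac{x}{N}) = \tfrac{G'(x/N)}{N}\mu_N(x)+\tfrac{G''(x/N)}{2N^2}\sigma_N^2(x)+E_N(x)$, where $\mu_N(x):=\sum_{y\in\Lambda_N}(y-x)p(y-x)$, $\sigma_N^2(x):=\sum_{y\in\Lambda_N}(y-x)^2 p(y-x)$ and $E_N(x)$ collects the aggregated remainder.

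The cubic error is immediate from the identity $|y-x|^3 p(y-x)=c_2$ for $y\neq x$: one has $\sum_{y\in\Lambda_N}|y-x|^3 p(y-x)\le c_2(N-2)$, so $|E_N(x)|=O(N^{-2})$ uniformly in $x$, and after multiplication by $N^2/\log N$ its contribution is $O(1/\log N)$. After multiplying the whole expansion by $N^2/\log N$, the first-order piece becomes exactly $G'(x/N)\,\tfrac{N}{\log N}\mu_N(x)$, which matches the quantity already subtracted in the statement. It thus reduces to showing $\tfrac{G''(x/N)\,\sigma_N^2(x)}{2\log N}\to c_2\,G''(x/N)$ uniformly in $x\in\Lambda_N$.

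For the quadratic moment, exploit the cancellation $(y-x)^2 p(y-x)=c_2/|y-x|$ on $y\neq x$ to get $\sigma_N^2(x)=c_2\bigl(H_{x-1}+H_{N-1-x}\bigr)$, where $H_k:=\sum_{j=1}^k 1/j$ (with $H_0:=0$). Using the elementary asymptotic $H_k=\log k+\gamma+O(1/k)$, one obtains $\sigma_N^2(x)=c_2\log\bigl((x-1)(N-1-x)\bigr)+O(1)$ whenever $x\geq 2$ and $x\leq N-2$. For $x$ such that $x$ and $N-x$ are both comparable to $N$, this is $2c_2\log N+O(1)$, so that $\sigma_N^2(x)/(2\log N)\to c_2$, as required.

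The main obstacle is making this convergence effectively uniform up to the extreme endpoints of $\Lambda_N$: for $x$ at distance $O(1)$ from $1$ or $N-1$ one harmonic piece is $O(1)$ and only the other contributes $\log N+O(1)$, so $\sigma_N^2(x)/(2\log N)$ tends pointwise to $1/2$ rather than to $1$. The plan to absorb this is exactly the ``boundary control'' mentioned in the introduction: split $\sigma_N^2(x)$ into a symmetric bulk sum over $|y-x|\leq \min(x-1,N-1-x)$ and an asymmetric boundary tail, compare the tail with the analogous boundary expansion of $\mu_N(x)$, and check that the residual boundary discrepancies match up with the first-order correction $G'(x/N)\tfrac{N}{\log N}\mu_N(x)$ already present on the right-hand side of the statement. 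Making this joint asymptotic cancellation precise, so that the sup over $x\in\Lambda_N$ really vanishes, is the technical heart of the argument; the Taylor expansion and the cubic remainder bound above are routine by comparison.
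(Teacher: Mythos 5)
Your expansion coincides with the paper's own route: a Taylor expansion of $G$ to second order, the cubic remainder killed by the identity $|z|^3p(z)=c_2$ (giving an $O(1/\log N)$ contribution after multiplying by $N^2/\log N$), the first-order term matching exactly the subtracted quantity $G'(\tfrac{x}{N})\tfrac{N}{\log N}\sum_y(y-x)p(y-x)$, and the second moment computed as $\sigma_N^2(x)=c_2\bigl(H_{x-1}+H_{N-1-x}\bigr)$ with $H_k=\sum_{j=1}^k j^{-1}$. All of that is correct and is exactly what the paper does; indeed your observation that $\sigma_N^2(x)/(2\log N)$ tends to $c_2/2$ rather than $c_2$ when $x$ or $N-x$ stays bounded is sharper than the paper's treatment, which disposes of this step via the one-sided bound $H_{x-1}+H_{N-1-x}\le 2H_{N-2}$ and does not address the matching lower bound.

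The gap is that you stop precisely at the step that matters, and the repair you sketch cannot work. After subtracting $G'(\tfrac{x}{N})\tfrac{N}{\log N}\mu_N(x)$ \emph{exactly}, as the statement does, what remains is $\tfrac{1}{2}G''(\tfrac{x}{N})\bigl(\tfrac{\sigma_N^2(x)}{\log N}-2c_2\bigr)+O(1/\log N)$; there is no residual first-order quantity left against which the boundary discrepancy of $\sigma_N^2$ could cancel, since the discrepancy multiplies $G''$ while $\mu_N$ multiplies $G'$. For $x$ at bounded distance from $1$ (resp.\ $N-1$) this remainder converges to $-\tfrac{c_2}{2}G''(0)$ (resp.\ $-\tfrac{c_2}{2}G''(1)$), so uniformity of the limit over all of $\Lambda_N$ requires $\Delta G(0)=\Delta G(1)=0$ — which holds for the compactly supported test functions used in the regimes $\theta\le 0$ and $\theta\in(0,1)$, but is not a consequence of smoothness alone. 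You should therefore either add this vanishing condition (or restrict the supremum to $x$ with $\min(x,N-x)\ge \delta N$, where $H_{x-1}+H_{N-1-x}=2\log N+O(1)$ does hold uniformly) and then conclude by the elementary estimate $|H_k-\log k|\le 1$, or produce a genuinely different mechanism; the proposed matching against $\mu_N(x)$ is not available.
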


\begin{proof}
By a Taylor expansion on $G$ we can bound from above the term inside the supremum by
	\begin{equation*}
	\begin{split}
	\Big|\cfrac{1}{{2}\log(N)}\Delta G(\tfrac{x}{N})\sum_{y \in \Lambda_N}(y-x)^2p(y-x)-{c_2}\Delta G(\tfrac{x}{N})\Big| 
	+ 	\Big|\cfrac{1}{{6}N\log(N)}\sum_{y \in \Lambda_N}G^{(3)}(\xi)(y-x)^3p(y-x)\Big|,
	\end{split}
	\end{equation*}
	for some $\xi$ between $\tfrac{x}{N}$ and $\tfrac{y}{N}$, {with  $x,y\in\Lambda_N$.} To conclude, it is enough to prove that these two terms vanish, as $N\to\infty$.  Observe that, since $G^{(3)}$ and $\eta_s^N$ are uniformly bounded, the second term of last display can be bounded from above by a constant times
	\begin{equation*}
	\cfrac{1}{N \log(N)}\sum_{y \in \Lambda_N}1 \lesssim \cfrac{1}{\log(N)},
	\end{equation*}
	and  so, it vanishes as $N\to\infty$. 
	The  remaining  term can be bounded from above by 
	\begin{equation*}
	\begin{split}
	&\sup_{x \in \Lambda_N}\Big|\Delta G(\tfrac{x}{N})\Big[\cfrac{1}{2\log(N)}\sum_{y \in \Lambda_N}(y-x)^2p(y-x)-c_2\Big]\Big|\\&\lesssim  \sup_{x \in \Lambda_N}\Big|\cfrac{c_2}{2\log(N)}\sum_{y =1-x}^{N-1-x}y^{-1}\mathbb{1}_{\{y\neq 0\}}-{c_2}\Big|\leq \Big|\cfrac{c_2}{\log(N)}\sum_{y =1}^{N-2}y^{-1}-{c_2}\Big|\\& \lesssim \cfrac{\log(N-2) +1}{\log(N)}-1\lesssim \cfrac{1}{\log(N)},
	\end{split}
	\end{equation*}
	which vanishes, as $N\to\infty$.
	Above, the first inequality follows from $\Delta G$ being uniformly bounded and the second one from the inequality $\sum_{n=1}^{k}n^{-1}\leq \log k +1$. 
	
\end{proof}

\subsection{Replacement lemmas}
The replacement lemmas are technical results which are used to close the equations coming from the Dynkin's formula in terms of the empirical measure.

\begin{lem}\label{replacement1}
	Fix $\epsilon \in (0,1)$. For any $\theta \geq 1$, for any $t \in [0,T]$ and $G \in C^{\infty}([0,1])$, the following limit holds
	\begin{equation}\label{statrep}
	\lim_{N \rightarrow \infty}\mathbb{E}_{\mu_N}\bigg[ {\Big|}\int_0^t\cfrac{1}{\log(N)} \sum_{x\in \Lambda_N}G'(\tfrac{x}{N})\big(\eta_s^N(x)-\overrightarrow{\eta}^{\epsilon N}_s(0)\big)\Theta_x^{-}ds {\Big|}\bigg]=0.
	\end{equation}
	The same result holds for the right boundary, that is,  {by replacing}  $\overrightarrow{\eta}^{\epsilon N}_s(0)$ by $\overleftarrow{\eta}^{\epsilon N}_s(N)$ and $\Theta_x^{-}$ by $\Theta_x^{+}$ (introduced in \eqref{theta+-}).
\end{lem}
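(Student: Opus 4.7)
I split $\sum_{x\in\Lambda_N}=\sum_{x=1}^{\epsilon N}+\sum_{x=\epsilon N+1}^{N-1}$ and treat the two regions separately. For the bulk region, the bound $\Theta_x^-=\sum_{z\geq x}zp(z)\lesssim c_2/x$ (immediate from $p(z)=c_2/z^3$) combined with $\eta_s^N(x),\overrightarrow\eta_s^{\epsilon N}(0)\in[0,1]$ and $\|G'\|_\infty<\infty$ yields a deterministic upper bound
\begin{equation*}
\frac{1}{\log N}\Big|\sum_{x>\epsilon N}G'(\tfrac{x}{N})\bigl[\eta_s^N(x)-\overrightarrow\eta_s^{\epsilon N}(0)\bigr]\Theta_x^-\Big|\,\lesssim\,\frac{1}{\log N}\sum_{x>\epsilon N}\frac{1}{x}\,\lesssim\,\frac{|\log\epsilon|}{\log N},
\end{equation*}
which vanishes as $N\to\infty$ for fixed $\epsilon>0$.

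\textbf{Near-boundary via entropy and Feynman-Kac.} Let $\nu_{h(\cdot)}$ be the Bernoulli product measure \eqref{prod} associated with a Lipschitz profile satisfying $h(0)=\alpha$, $h(1)=\beta$, and denote by $Y_s^N(\eta)$ the near-boundary integrand $\frac{1}{\log N}\sum_{x=1}^{\epsilon N}G'(\tfrac{x}{N})[\eta(x)-\overrightarrow\eta^{\epsilon N}(0)]\Theta_x^-$. By the entropy inequality applied with parameter $N$, the bound \eqref{entropy estimate}, and the Feynman-Kac formula (after removing the absolute value via $e^{|u|}\leq e^{u}+e^{-u}$), the corresponding contribution to the expectation in \eqref{statrep} is bounded by $K_0$ plus
\begin{equation*}
\int_0^t\sup_f\left\{\int Y_s^N\,f\,d\nu_{h(\cdot)}\;+\;\tfrac{\Theta(N)}{N}\bigl\langle L_N\sqrt f,\sqrt f\bigr\rangle_{\nu_{h(\cdot)}}\right\}ds,
\end{equation*}
where the supremum is over densities $f$ with respect to $\nu_{h(\cdot)}$. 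Invoking \eqref{estimateDirLip} with $\Theta(N)=N^2/\log N$, the Dirichlet piece is dominated by $-\tfrac{\Theta(N)}{4N}D_N(\sqrt f,\nu_{h(\cdot)})$ plus vanishing error.

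\textbf{Absorption by the Dirichlet form.} Writing $\eta(x)-\overrightarrow\eta^{\epsilon N}(0)=(\epsilon N)^{-1}\sum_{y=1}^{\epsilon N}[\eta(x)-\eta(y)]$ and repeating the decomposition used in \eqref{boh}--\eqref{boh2}, each $\int[\eta(x)-\eta(y)]f\,d\nu_{h(\cdot)}$ splits into a symmetric piece involving $f(\eta)-f(\sigma^{x,y}\eta)$, to be absorbed by $\tfrac{\Theta(N)}{4N}D_N(\sqrt f,\nu_{h(\cdot)})$ via $ab\leq\tfrac{A}{2}a^2+\tfrac{1}{2A}b^2$ with $A$ tuned so that the $(\sqrt f-\sqrt f\circ\sigma^{x,y})^2$-part matches $\tfrac{\Theta(N)}{N}p(y-x)$, together with a Radon-Nikodym correction $1-\nu_{h(\cdot)}(\sigma^{x,y}\eta)/\nu_{h(\cdot)}(\eta)$ of order $|y-x|/N$ coming from the Lipschitz hypothesis on $h$. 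Inserting $\Theta(N)=N^2/\log N$ and $p(y-x)\gtrsim|y-x|^{-3}$, and using the Taylor expansion $G'(\tfrac{x}{N})=G'(0)+O(\epsilon)$ for $x\leq\epsilon N$ to reduce to the case $G'\equiv G'(0)$, the residual after summation over $x,y\in[1,\epsilon N]$ is of order $O(\epsilon)+o_N(1)$, which is enough since the subsequent limit $\epsilon\to 0$ is performed in the proof of Proposition~\ref{cara}.

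\textbf{Main obstacle.} The delicate point is that $\Theta_x^-/\log N$ places non-vanishing mass (tending to $c_2$) on the near-boundary block $[1,\epsilon N]$, by the same computation as in Lemma~\ref{thetaconv}; no static/summability argument can therefore close the replacement, and one must genuinely exploit the exchange dynamics to equalise occupancies across the block. The critical time scaling $\Theta(N)=N^2/\log N$ is precisely the threshold at which the Dirichlet form, fed by the long-range jump rate $p(y-x)\sim|y-x|^{-3}$, is strong enough to do so; both the sub-critical time scale and a faster-decaying $p(\cdot)$ would fail. The right-boundary statement follows from a symmetric argument, replacing $\Theta_x^-$ and $\overrightarrow\eta^{\epsilon N}(0)$ by $\Theta_x^+$ and $\overleftarrow\eta^{\epsilon N}(N)$.
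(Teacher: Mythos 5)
Your splitting of the sum and the bulk estimate $\frac{1}{\log N}\sum_{x>\epsilon N}x^{-1}\lesssim |\log\epsilon|/\log N$ agree with the paper, and the general entropy/Feynman--Kac framework is the right one. However, the absorption step contains a genuine quantitative failure. You propose to bound $\int[\eta(x)-\eta(y)]f\,d\nu$ using the \emph{direct} exchange $\sigma^{x,y}$, tuning $A$ so that the $(\sqrt f-\sqrt f\circ\sigma^{x,y})^2$-part matches $\tfrac{\Theta(N)}{N}p(y-x)$. The price of this matching is the reciprocal weight $1/p(y-x)\sim|x-y|^{3}$ in the residual term. Carrying out the computation: the admissible choice is $A_{x,y}\lesssim \epsilon N^{2}p(y-x)/\Theta_x^-$, and the residual ($b^2$-part of Young's inequality) sums to
\begin{equation*}
\sum_{x,y\leq\epsilon N}\frac{(\Theta_x^-)^{2}\,|x-y|^{3}}{\epsilon^{2}N^{3}\log N}\;\lesssim\;\frac{(\epsilon N)^{4}}{\epsilon^{2}N^{3}\log N}\sum_{x\leq \epsilon N}x^{-2}\;\lesssim\;\frac{\epsilon^{2}N}{\log N}\;\longrightarrow\;\infty,
\end{equation*}
not $O(\epsilon)$ as you assert. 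The point is that for $|x-y|$ of macroscopic order $\epsilon N$ the direct jump rate $p(y-x)\sim(\epsilon N)^{-3}$ is far too small to equalise occupancies across the block at an affordable cost. The paper avoids this by telescoping $\eta(x)-\eta(y)=\sum_{z}\bigl(\eta(z+1)-\eta(z)\bigr)$ and comparing each increment against the \emph{nearest-neighbour} part of the Dirichlet form (rate $p(1)=O(1)$); the cost per pair is then proportional to $|x-y|$ rather than $|x-y|^{3}$, which after the choice $A=\tfrac{N}{B\log N}\cdot\tfrac{2\log N}{\sum_{x\leq\epsilon N}\Theta_x^-}$ yields the bound $B\epsilon$. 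This moving-particle/telescoping idea is the missing ingredient; your appeal to ``the decomposition used in \eqref{boh}--\eqref{boh2}'' does not supply it, since that computation concerns the flip dynamics $\sigma^{x}$, not exchanges over macroscopic distances.

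A second, more easily repaired, gap: you apply the entropy inequality with parameter $N$, which leaves the non-vanishing constant $K_0$ in the final bound (and, if you insist on the Lipschitz profile and \eqref{estimateDirLip}, an additional $O(1)$ error from $\tfrac{\Theta(N)}{N}\cdot\tfrac{\log N}{N}$). The paper uses the parameter $BN$, so that the entropy cost becomes $K_0/B$ and all residuals are of order $1/B+B\epsilon$, and concludes by sending $\epsilon\to0$ and then $B\to\infty$. Without the free parameter $B$ your final estimate is $K_0+O(\epsilon)$, which does not give the claimed limit.
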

\begin{proof}
We prove in detail the limit for the left boundary but the one for the right boundary is completely analogous.  By splitting  the sum,   we  rewrite the integrand function above as 
\begin{equation}\label{statrep2}
\cfrac{1}{\log(N)} \sum_{\substack{x\in \Lambda_N\\x\leq \epsilon N}}G'(\tfrac{x}{N})\big(\eta_s^N(x)-\overrightarrow{\eta}^{\epsilon N}_s(0)\big)\Theta_x^{-}+\cfrac{1}{\log(N)} \sum_{\substack{x\in \Lambda_N\\x > \epsilon N}}G'(\tfrac{x}{N})\big(\eta_s^N(x)-\overrightarrow{\eta}^{\epsilon N}_s(0)\big)\Theta_x^{-}.
\end{equation}
Observe that the term on the right-hand side of the previous display can be bounded by a constant times
\begin{equation*}
\cfrac{1}{\log(N)} \sum_{\substack{x\in \Lambda_N\\x > \epsilon N}}\sum_{y\geq x}c_2y^{-2}\lesssim \cfrac{1}{\log(N)} \sum_{\substack{x\in \Lambda_N\\x > \epsilon N}}x^{-1}\lesssim \frac{1}{\log(N)} \log\Big(\tfrac{N-1}{\epsilon N}\Big)
\end{equation*}
which vanishes, as $N\to\infty.$
To conclude, now we have to make  use of the time integration of the remaining term.   {Therefore the $ L^1(\mathbb P_{\mu_N})$-norm of the time integral from $0$ to $t$ of that term can be bounded from above, by using the entropy inequality and Jensen's inequality by $\tfrac{K_0}{B}$} plus
\begin{equation*}
\frac{1}{BN}\log \mathbb{E}_{\nu_{h}}\Bigg[e^{BN\big|\int_0^t\tfrac{1}{\log(N)} \sum_{\substack{x\in \Lambda_N\\x\leq \epsilon N}}G'(\tfrac{x}{N})\big(\eta_s^N(x)-\overrightarrow{\eta}^{\epsilon N}_s(0)\big)\Theta_x^{-}ds\big|}\Bigg].
\end{equation*}
Above  $K_0$ and $B$ are positive constants,
 {We note that the constant $K_0$ comes from \eqref{entropy estimate}, i.e. it is the price}  to replace the measure $\mu_N$ by $\nu_h$, {the  Bernoulli product measure associated with a constant profile $h(\cdot)\equiv h$.} Moreover, by the Feynman-Kac's formula, last display is bounded from above by a  constant times
\begin{equation}\label{FK1}
\sup_{f}\Bigg\{\int\frac{1}{\log(N)}\sum_{\substack{x\in \Lambda_N\\x\leq \epsilon N}}G'\Big(\tfrac{x}{N}\Big)\big(\eta^N(x)-\overrightarrow{\eta}^{\epsilon N}(0)\big)\Theta_x^{-}f(\eta)d\nu_{h}+\frac{N}{B\log(N)}\langle L_N\sqrt f, \sqrt f \rangle_{\nu_{h}}
\Bigg\},
\end{equation}
where the supremum is carried over all the densities with respect to $\nu_{h}$. Now, we rewrite the sum inside the supremum as follows
\begin{equation*}
\begin{split}
&\sum_{\substack{x\in \Lambda_N\\x\leq \epsilon N}}G'(\tfrac{x}{N})\big(\eta^N(x)-\overrightarrow{\eta}^{\epsilon N}(0)\big)\Theta_x^{-}\\=&\frac{1}{\epsilon N}\sum_{\substack{x\in \Lambda_N\\x\leq \epsilon N}}G'(\tfrac{x}{N})\Theta_x^{-}\sum_{y =1}^{\epsilon N}\big(\eta^N(x)-\eta^N(y)\big)\\=&\frac{1}{\epsilon N}\sum_{\substack{x\in \Lambda_N\\x\leq \epsilon N}}G'(\tfrac{x}{N})\Theta_x^{-}\Big[\sum_{y =1}^{x-1}\sum_{z=y}^{x-1}\big(\eta^N(z+1)-\eta^N(z)\big)-\sum_{y =x+1}^{\epsilon N}\sum_{z=x}^{y-1}\big(\eta^N(z+1)-\eta^N(z)\big)\Big].
\end{split}
\end{equation*}
Let us define 
\begin{equation*}
\begin{split}
	C^{\pm}(\eta,f,x):=&\Big[\sum_{y =1}^{x-1}\sum_{z=y}^{x-1}\big(\eta^N(z+1)-\eta^N(z)\big)\big( f(\eta) \pm f(\sigma^{z,z+1}\eta)\big)\\-&\sum_{y =x+1}^{\epsilon N}\sum_{z=x}^{y-1}\big(\eta^N(z+1)-\eta^N(z)\big( f(\eta) \pm f(\sigma^{z,z+1}\eta)\big)\Big] .
\end{split} 
\end{equation*}
Note now that we can rewrite the first term in the supremum in \eqref{FK1} as
\begin{equation*}
\begin{split}
\int\frac{1}{{\epsilon N}\log(N)}\sum_{\substack{x\in \Lambda_N\\x\leq \epsilon N}}\Theta_x^-G'(\tfrac{x}{N})C^-(\eta,f,x)d\nu_{h}+\int\frac{1}{2{\epsilon N}\log(N)}\sum_{\substack{x\in \Lambda_N\\x\leq \epsilon N}}G'(\tfrac{x}{N})C^+(\eta,f,x)d\nu_{h}.
\end{split}
\end{equation*}
By a change of  variables, the  term  on the {right-hand side} of  the previous display is equal to $0$. From Young's and Cauchy-Schwarz's inequality, we can  bound from above the leftmost term in the previous display  by the sum of
\begin{equation}\label{I}
\begin{split}
\int\frac{1}{\epsilon NA\log(N)}&\sum_{\substack{x\in \Lambda_N\\x\leq \epsilon N}}\Theta_x^-G'(\tfrac{x}{N})^2\Big[\sum_{y =1}^{x-1}\sum_{z=y}^{x-1}\big(\eta^N(z+1)-\eta^N(z)\big)^2\big(\sqrt f(\eta)+\sqrt f(\sigma^{z,z+1}\eta)\big)^2\\&-\sum_{y =x+1}^{\epsilon N}\sum_{z=x}^{y-1}\big(\eta^N(z+1)-\eta^N(z)\big)^2\big(\sqrt f(\eta)+\sqrt f(\sigma^{z,z+1}\eta)\big)^2\Big]d\nu_{h}
\end{split}
\end{equation}
and 
\begin{equation}
\label{II}
\begin{split}
\int\frac{A}{2\epsilon N\log(N)}\sum_{\substack{x\in \Lambda_N\\x\leq \epsilon N}}\Theta_x^-&\bigg[\sum_{y =1}^{x-1}\sum_{z=y}^{x-1}\big(\sqrt f(\eta)-\sqrt f(\sigma^{z,z+1}\eta)\big)^2\\&-\sum_{y =x+1}^{\epsilon N}\sum_{z=x}^{y-1}\big(\sqrt f(\eta)-\sqrt f(\sigma^{z,z+1}\eta)\big)^2\bigg]d\nu_{h},
\end{split}
\end{equation}
for  {any}  positive  constant $A$.
Note that 
\begin{equation*}
\begin{split}
\int\bigg[\sum_{z=y}^{x-1}\big(\sqrt f(\eta)-\sqrt f(\sigma^{z,z+1}\eta)\big)^2-\sum_{z=x}^{y-1}&\big(\sqrt f(\eta)-\sqrt f(\sigma^{z,z+1}\eta)\big)^2\bigg]d\nu_{h}\leq D^N(f,\nu_{h}).
\end{split}
\end{equation*}
Now we choose  $$A=\frac{N}{B\log(N)}\frac{2\log(N)}{\sum_{\substack{x\in \Lambda_N\\x\leq \epsilon N}}\Theta_x^-},$$ which is possible since the  {partial sum in $x$} is convergent if multiplied by $1/\log(N)$, see Lemma \ref{thetaconv}. Then, from \eqref{estimateDir}, we can bound  \eqref{II} by $-\frac{N}{B\log(N)}\langle L_N\sqrt f, \sqrt f \rangle_{\nu_{h}}$ and  this term cancels with the last one inside the supremum in \eqref{FK1}.

It remains to analyze \eqref{I} for  this choice of $A$. Since $\frac{1}{\log(N)}\sum_{x \in \Lambda_N}\Theta_x^-$ is convergent, $G'$ and $\eta$ are bounded, we can bound this term by a constant times
\begin{equation*}
\begin{split}
\frac{B}{\epsilon N^2}\sum_{\substack{x\in \Lambda_N\\x\leq \epsilon N}}\Theta_x^-\int \sum_{y=1}^{x-1}\sum_{z=1}^{x-1}\big(\sqrt f(\eta)+\sqrt f(\sigma^{z,z+1}\eta)\big)^2 d\nu_{h}& \lesssim \frac{B}{\epsilon N^2}\sum_{\substack{x\in \Lambda_N\\x\leq \epsilon N}}x^2\sum_{y \geq x} yp(y)\\& \lesssim \frac{B}{\epsilon N^2}\sum_{\substack{x\in \Lambda_N\\x\leq \epsilon N}}x \lesssim B\epsilon,
\end{split}
\end{equation*}
where the first inequality comes from the fact that $f$ is a density with respect to $\nu_{h}$ and from the inequality $(a+b)^2\lesssim a^2+b^2$.
So, when $\epsilon \rightarrow 0$, this term vanishes.  {Finally, by taking $B$ to infinity, we  conclude} the proof of \eqref{statrep}. 

\end{proof}

\begin{rem}\label{RMK1}
	We observe that following the same steps as in the previous proof, it is also possible to show that
\begin{equation}\label{statrep3}
\lim_{N \rightarrow \infty}\mathbb{E}_{\mu_N}\bigg[ {\Big|}\int_0^t \sum_{x\in \Lambda_N}G(	\tfrac{x}{N})r^{-}_N(\tfrac{x}{N})\big(\eta_s^N(x)-\overrightarrow{\eta}^{\epsilon N}_s(0)\big)ds {\Big|}\bigg]=0
\end{equation}
and the same result for the right boundary, that is, just replace  $\overrightarrow{\eta}^{\epsilon N}_s(0)$ by $\overleftarrow{\eta}^{\epsilon N}_s(N)$ and $\Theta_x^{-}$ by $\Theta_x^{+}.$ Indeed, if one goes through the previous proof one just has to note that  $\sum_{x \in \Lambda_N}r^{\pm}_N(\tfrac{x}{N})$ plays exactly the same role of the convergent sum $\tfrac{1}{\log(N)}\sum_{x \in\Lambda_N}\Theta_x^{\pm}$.
\end{rem}

\appendix
\section{Uniqueness of weak solutions}
\label{appendix}
In this section we prove uniqueness of the weak solutions introduced in Section \ref{HE}. We follow closely the proof that can be found in  Appendix 2 of \cite{KL}. We present the details of the proof in the Robin case and in the reaction-diffusion case, i.e. for the  { solutions in Definition \ref{Rob} and Definition \ref{reacdif}, since the uniqueness for the remaining weak solutions can be proved as  in  \cite{BGJO}. We note that in the reaction case we can  follow the proof in Appendix A of \cite{BGJO} since items (1) and (3) of Lemma A.1 also hold in our situation.}
\subsection{Robin case}

Consider two weak solutions $\rho^1$ and $\rho^2$ in the sense of Definition \ref{Rob} starting from the same initial condition,  and let $\bar \rho:=\rho^1-\rho^2$. Note that 
\begin{equation}\label{derivativepsi}
	\left\langle \bar \rho_{t},  G_{t} \right\rangle = \int_0^t\left\langle \bar \rho_{s},\Big(\partial_s +   c_2\Delta \Big) G_{s}  \right\rangle ds
\end{equation}
for any $G \in \mathcal{S}_{Rob}$. We consider the associated  {Sturm-Liouville} problem, as in  \cite{sturm}, which is given by
\begin{equation}\label{sturm}
	\begin{cases}
		& c_2 \psi''(u)+\lambda\psi(u)=0,  \quad u \in (0,1),\\
		&\psi'(0)=\tfrac{ m}{  c_2}\psi(0), \quad  \psi'(1)=-\tfrac{ m}{ c_2}\psi(1)\\
	\end{cases}
\end{equation}
for any $\lambda>0$. The solutions of $c_2 \psi''(u)+\lambda\psi(u)=0$ are of the form 
\begin{equation}
	\label{psi}
	\psi(u)=A \sin(\tfrac{\sqrt\lambda}{c_2} u) + B \cos(\tfrac{\sqrt\lambda}{c_2} u),
\end{equation}
for some constants $A$ and $B$. Let $\tilde \lambda :=\tfrac{\sqrt\lambda}{c_2}$ and imposing the condition $\psi'(0)=\tfrac{ m}{  c_2}\psi(0)$ we find that $$A=\frac{m}{c_2\tilde \lambda} B.$$
Then we apply the second boundary condition which is, thanks to the computations above,
\begin{equation}
	B\frac{m}{c_2}\cos(\tilde \lambda)-B\tilde\lambda \sin(\tilde \lambda) = -B\frac{m}{c_2}\Big(\frac{m}{\tilde\lambda c_2}\sin(\tilde \lambda)+ \cos(\tilde \lambda)\Big).
\end{equation}
The equation above coincide with the trascendental equation
\begin{equation}
	\tan (\tilde \lambda)=\frac{2mc_2 \tilde \lambda}{\tilde \lambda^2c_2^2-m^2}
\end{equation}
which has infinite solutions $\{\tilde \lambda_n\}_{n\geq 1}$.  {These} solutions behave exactly as described in \cite{sturm}, i.e. $0<\tilde \lambda_1<\tilde \lambda_2<\dots$ and for $n$ big enough they are of order $n$. So, since $\lambda_n=c_2^2\tilde \lambda_n^2$ we have that $0<\lambda_1<\lambda_2 \dots \lesssim \lambda_n \sim n^2 <\dots$. Moreover, as proved in \cite{sturm}, choosing, for any $n \geq 1$, a normalizing constant $B_n$, the set of functions $\{\psi_n\}_{n\geq 1}$ defined on $u \in [0,1]$ by
\begin{equation}\label{ortbas}
	\psi_n(u)=\frac{m}{\sqrt{\lambda_n}} B_n \sin\Big(\tfrac{\sqrt{\lambda_n}}{c_2} u\Big) + B_n \cos\Big(\tfrac{\sqrt{\lambda_n}}{c_2} u\Big),
\end{equation}
forms an orthonormal basis of $L^2$.

Now, we follow closely the strategy presented in Appendix  {2.4} of \cite{KL}. For any $t \in [0,T]$ we define \begin{equation}\label{Vt}V(t)=\sum_{n\geq1}\langle \bar \rho_t,\psi_n\rangle^2.\end{equation}
 {Observe that from \eqref{derivativepsi} we conclude that $V(t)$  is  differentiable in time. Therefore, }
the goal is  to show that $V'(t)\lesssim V(t)$ and from Gronwall's inequality we get  $V(t)\leq V(0)=0$.
The latter implies, by definition of $V$, that for any $t \in [0,T]$, $\bar \rho_t=0$ almost everywhere in $[0,1]$ which concludes the proof of uniqueness.  Let us then prove that $V'(t)\leq 0$. Fix a $t \in [0,T]$ and note that
\begin{equation}\label{Vder}
	V'(t)=2\sum_{n\geq 1}\langle \bar \rho_t,\psi_n\rangle\frac{d}{dt}\langle \bar \rho_t,\psi_n\rangle.
\end{equation}

From \eqref{derivativepsi} we see that
\begin{equation}
	\frac{d}{dt}\langle \bar \rho_t,\psi_n\rangle=\frac{d}{dt}\int_0^t \langle \bar \rho_s, c_2\psi_n''\rangle ds = \langle \bar \rho_t,\psi''_n\rangle= - \frac{\lambda_n}{c_2}\langle \bar \rho_t, \psi_n\rangle
\end{equation}
where last equality comes from the definition of $\psi_n.$ From this we get
\begin{equation}
	V'(t)=-2\sum_{n\geq 1}\frac{\lambda_n}{c_2}\langle \bar \rho_t,\psi_n\rangle^2 \leq 0 < V(t).
\end{equation}
The first inequality above is a consequence of  the fact that the eigenvalues $\{\lambda_n\}_{n\geq 1}$ are positive. This ends the proof.

\subsection{Reaction-diffusion case}

 This proof goes under the same strategy above and  the analysis of the associated  Sturm-Liouville problem as  in Section 5.3 of \cite{fluc}.
As above, for any $t \in [0,T]$, let $\bar \rho_t=\rho^1_t-\rho^2_t$, where $\rho^1$ and $\rho^2$ are two weak solutions of the reaction-diffusion equation with the same initial condition in the sense of Definition \ref{reacdif}. Then,  the associated Sturm-Liouville problem is given by
\begin{equation}\label{sturmreac}
	\begin{cases}
		& c_2\psi''(u)=\kappa V_1(u)\psi(u)-\lambda\psi(u),  \quad u \in (0,1),\\
		& \psi(0)=0, \quad  \psi(1)=0,\\
	\end{cases}
\end{equation}
for some $\lambda>0$. In Proposition 5.9 of \cite{fluc} it is proved that this problem admits solutions $\{\lambda_n\}_{n\geq 1}$ such that
$0<\lambda_1<\lambda_2<\lambda_3<\dots$ which behave, for $n$ big enough as $\lambda_n\gtrsim n^2$. Moreover, the solutions $\{\psi\}_{n\geq 1}$ of the problem above for $\lambda$ taken in the set  $\{\lambda_n\}_{n\geq 1}$ form an orthonormal basis  of $L^2$.

As above, the proof ends by noting that
\begin{equation}
	\frac{d}{dt}\langle \bar \rho_t,\psi_n\rangle=\frac{d}{dt}\int_0^t \langle \bar \rho_s, c_2\psi_n''-\kappa V_1\psi_n\rangle ds = - \lambda_n\langle \bar \rho_t, \psi_n\rangle
\end{equation}
where the first equality comes from the definition of $\bar\rho$  and the second from \eqref{sturmreac}.

\vspace{1cm}
\thanks{ {\bf{Acknowledgements:\\ }}
\quad

P.G. and S.S. thank  FCT/Portugal for support through the project 
UID/MAT/04459/2013.  This project has received funding from the European Research Council (ERC) under  the European Union's Horizon 2020 research and innovative programme (grant agreement   n. 715734).}

\nocite{*}
\bibliography{bibliografia}
\bibliographystyle{plain}

\end{document}